\documentclass[11pt,reqno]{amsart}
\usepackage[T1]{fontenc}
\usepackage{lmodern}
\usepackage[a4paper,textwidth=6.15in,textheight=8.95in,centering]{geometry}
\usepackage{amsmath,amssymb,mathtools}
\usepackage{microtype,booktabs,array,needspace}
\usepackage[hidelinks,pdfencoding=auto]{hyperref}
\newtheorem{theorem}{Theorem}[section]
\newtheorem{lemma}[theorem]{Lemma}
\newtheorem{proposition}[theorem]{Proposition}
\newtheorem{corollary}[theorem]{Corollary}
\theoremstyle{remark}

\newcommand{\AI}{\mathsf{AI}}
\newcommand{\CAI}{\mathsf{CAI}}
\newcommand{\A}{\mathcal A_4}
\newcommand{\T}{\mathcal T}
\newcommand{\Ncal}{\mathcal N}
\newcommand{\F}{\mathbb F}
\newcommand{\eps}{\varepsilon}
\newcommand{\V}{\mathsf V}
\newcommand{\bu}{\mathbf u}
\newcommand{\bv}{\mathbf v}

\newcommand{\bs}{\mathbf s}
\newcommand{\bt}{\mathbf t}
\newcommand{\bfam}{\mathbf f}
\newcommand{\bK}{\mathbf k}

\DeclareMathOperator{\Aff}{Aff}
\numberwithin{equation}{section}
\allowdisplaybreaks[1]
\title[The joint variety of four-element ai-semirings]{The variety generated by all additively idempotent semirings of order four}

\author{Mengya Yue}
\address{School of Mathematics, Northwest University, Xi'an 710127, Shaanxi, P.R. China}
\email{myayue@yeah.net}
\thanks{Mengya Yue: ORCID 0009-0002-3939-947X.}

\author{Xiaolei Shao}
\address{School of Mathematics, Northwest University, Xi'an 710127, Shaanxi, P.R. China}
\email{xiaoleishao@yeah.net}

\date{}
\subjclass[2020]{16Y60, 08B05, 03C05}
\keywords{Additively idempotent semiring, variety, identity, finite basis problem}
\hypersetup{pdftitle={The variety generated by all additively idempotent semirings of order four},pdfauthor={Mengya Yue and Xiaolei Shao},pdfsubject={A structural proof of nonfinite basability}}
\begin{document}
\begin{abstract}
We prove that the variety generated by all four-element additively idempotent
semirings has no finite basis for its identities. 
\end{abstract}
\maketitle

\section{Introduction}\label{sec:intro}

An \emph{additively idempotent semiring}, or \emph{ai-semiring}, is an algebra
$(S,+,\cdot)$ in which $(S,+)$ is a commutative idempotent semigroup,
$(S,\cdot)$ is a semigroup, and
\[
x(y+z)\approx xy+xz,\qquad (x+y)z\approx xz+yz.
\]
All algebras considered here have signature $(+ ,\cdot)$, without constants.
An ai-semiring is called \emph{commutative} if its multiplication is
commutative. We denote the varieties of all ai-semirings and all commutative
ai-semirings by $\AI$ and $\CAI$, respectively. The additive order is given by
\[
a\leq b\quad\Longleftrightarrow\quad a+b=b.
\]
Both operations preserve this order.

For a class $\mathcal K$ of algebras, let $\V(\mathcal K)$ denote the variety
generated by $\mathcal K$. A variety is \emph{finitely based} if it can be
defined by finitely many identities, and \emph{nonfinitely based} otherwise.
The finite basis property of an algebra means the corresponding property
of its generated variety. We use the standard results of equational logic;
see Burris and Sankappanavar~\cite{BS}.

The finite basis problem for ai-semirings of small order has received
particular attention. Shao and Ren~\cite{SR} proved that the variety
generated by the two-element ai-semirings is hereditarily finitely based.
The work of Zhao et al.~\cite{ZRCSD} and Jackson, Ren, and Zhao~\cite{JRZ}
settled the problem for the individual three-element ai-semirings: among
the $61$ isomorphism types, only $S_7$ is nonfinitely based. For order four,
Ren et al.~\cite{RLLC, RLYC} and Yue et al.~\cite{YRZS} studied classes
determined by the additive semilattice. Further nonfinite basis results
were obtained for $S_{(4,124)}$ by Yue and Ren~\cite{YR124}, and for
$S_{(4,545)}$ and $S_{(4,634)}$ by Yue, Ren, and Gao~\cite{YRG}.

The joint variety generated by all algebras of a given order presents a
different question from the classification of its individual generators.
In particular, nonfinite basability does not follow merely from the
inclusion of a nonfinitely based subvariety. Wang, Ju, and Wang~\cite{WJW}
proved that the joint variety of three-element ai-semirings is nonfinitely
based, and obtained the same conclusion for three-element semirings under
a more general convention on addition. Here we consider
\[
\A=\V\{S\in\AI:|S|=4\}.
\]
Our main result is the following.

\begin{theorem}\label{thm:main}
The variety $\A$ is nonfinitely based.
\end{theorem}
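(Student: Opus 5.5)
The plan is the standard route for nonfinite basability of a finitely generated variety: for each $n$, produce an algebra $B_n$ (or a family of identities $\sigma_n$) such that $B_n$ satisfies every identity of $\A$ in at most $n$ variables but $B_n\notin\A$. Equivalently, I will find identities $\sigma_n$ that hold in $\A$ but do not follow from the identities of $\A$ in fewer than $n$ variables. Since $\A$ is generated by finitely many finite algebras, its $n$-variable identities can be checked one generator at a time. Hence the crux is to exhibit $B_n\notin\A$ whose $n$-generated subalgebras all lie in $\A$. For this it suffices that every $n$-generated subalgebra of $B_n$ is a homomorphic image of a subalgebra of a finite product of four-element ai-semirings.

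First I will identify a candidate source of nonfinite basability inside $\A$. The natural one is the three-element semiring $S_7$ (which embeds into a four-element ai-semiring, e.g.\ by adjoining an element), or one of $S_{(4,124)}$, $S_{(4,545)}$, $S_{(4,634)}$. As the introduction warns, nonfinite basability does not pass to the join for free. I will therefore reuse the \emph{syntactic} witness from the known proof for $S_7$ (Jackson--Ren--Zhao), namely the identities
\[
\sigma_n:\quad x_1x_2\cdots x_n x_1 + \cdots \approx \cdots
\]
of a cyclic ``flat'' type. I then show two things about them. First, $\sigma_n$ holds in every four-element ai-semiring. This must be checked against all isomorphism types, and I will organize the check by the additive semilattice and by whether the multiplicative semigroup is nil, a band, and so on, using structural lemmas rather than enumeration. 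Second, $\sigma_n$ is not a consequence of the $(n-1)$-variable identities of $\A$.

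For the second point I will construct $B_n$ as a flat semiring built from a cyclic word. Take all subwords of $x_1x_2\cdots x_nx_1\cdots$ (cyclically) together with $0$. Multiplication is concatenation when the result is such a subword and $0$ otherwise, and addition is the flat semilattice in which distinct elements sum to $0$. Then $\sigma_n$ fails in $B_n$ by direct evaluation. To show every $(n-1)$-generated subalgebra $C$ lies in $\A$, note that deleting one letter breaks the cycle, so $C$ is a flat semiring of subwords of a linear (acyclic) word. I will then show that such acyclic flat semirings are separated by homomorphisms into a specific four-element ai-semiring, for example the flat extension of the three-element nilpotent semigroup $\{a,ab,0\}$ or a four-element flat semiring $S_c(ab)$ type. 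Concretely, for each pair of distinct elements $u\neq v$ of $C$, I construct a map into that four-element algebra, sending letters to $a$, $b$ or a suitable element, that separates $u$ from $v$.

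The main obstacle is the first verification: that $\sigma_n$ holds in all four-element ai-semirings. There are many isomorphism types, and some (the nonfinitely based $S_{(4,\cdot)}$) could conceivably refute a naive choice of $\sigma_n$. I would first try the weakest possible form of $\sigma_n$. One option is an identity asserting that a long cyclic product is absorbed by a sum of its ``shifts''. Another is an identity of the form $w\approx w+w'$, which holds for all ai-semirings whose multiplicative semigroups satisfy a power or periodicity law of small index, because in order four every element satisfies $x^{k}\approx x^{k+12}$-type laws. I would adjust $\sigma_n$ until the $\A$-side argument reduces to such periodicity and order properties, and then recheck that the cyclic flat algebra $B_n$ still refutes it.
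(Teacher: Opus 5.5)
There is a genuine gap. The proposal is a strategy rather than a proof, and it fails at both of its load-bearing points.

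\textbf{The identities are never specified.} Your display $x_1x_2\cdots x_nx_1+\cdots\approx\cdots$ is left with ellipses, and you plan to ``adjust $\sigma_n$'' later. But the whole content of the theorem is the choice of an identity that holds in \emph{every} four-element ai-semiring and is still not derivable from identities in boundedly many variables. You call the verification in all four-element ai-semirings the main obstacle, and you never carry it out.

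Periodicity laws for the multiplicative reducts cannot supply this verification, because absorption $w\preceq\bu$ depends on the addition. In $D$ it is a content condition, in $G_p$ an affine condition on exponent vectors modulo $p$ (Lemma~\ref{lem:characterization}), and in $H$ a bound on multiplicities. Nothing guarantees that the identities witnessing nonfinite basability of $S_7$ hold in, say, $G_3$.

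The paper's $\sigma_n$ is built for exactly this purpose:
\begin{itemize}
\item The squares $x_i^2$ are, by Lemma~\ref{lem:small-powers}, idempotent or nil-like outside $G_3$. This makes the anchored absorption $pz\le p+s_1z+\sum t_is_iz$ provable by a structural case analysis (Proposition~\ref{prop:strong-absorption}).
\item The cyclic triples $y_iy_{i+1}y_{i+2}$, together with $n\equiv1\pmod 3$, handle $G_3$.
\end{itemize}

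\textbf{The semantic half is unsound as written.} An $(n-1)$-generated subalgebra of $B_n$ need not be generated by letters, so ``deleting one letter breaks the cycle'' fails. For example, $x_1$ and $x_2x_3\cdots x_n$ already generate $x_1x_2\cdots x_nx_1$, whenever this is a nonzero element of $B_n$, as your description requires. So small-generated subalgebras are not reduced to flat semirings of acyclic words. Deciding which flat word semirings lie in $\A$ is itself a nontrivial question about isoterms for $\A$.

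Separately, separating points by ``maps sending letters to $a$, $b$'' requires these maps to be homomorphisms on $C$. That means they must respect every product that is $0$ in $C$, and you do not check this.

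The paper avoids subalgebra analysis entirely and works syntactically with commutative terms. It uses $H,D,G_2,G_3$ to show that the $\T$-class of $\bu_n$ is exactly $\{\bu_n,\bu_n+q_n\}$ (Lemma~\ref{lem:class}). It then shows that the first deduction step introducing $q_n$ must use an identity with at least $n+1$ variables (Lemma~\ref{lem:substitution}).
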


Syntactic sufficient conditions for nonfinite basability were developed
in~\cite{YR124,YRG,LRY}. We establish a condition requiring a variety to
contain four specified commutative ai-semirings and to satisfy certain
cyclic identities. The auxiliary semirings restrict the multiplicities and
contents of words, and impose affine conditions on their exponent vectors.
Together these restrictions determine an equivalence class containing only
two terms. We then show that an identity used to pass between those terms
must involve an arbitrarily large number of variables.

The construction is based on the isolated-class argument of~\cite{WJW}.
The construction for four-element generators uses squared variables,
three consecutive variables in each local word, and flat extensions of
cyclic groups of both orders two and three. Repeated variables require an additional argument:
under a semiring substitution, their different occurrences may select
different words from the same image. The proof below treats these choices
without restricting substitutions to words.

Section~\ref{sec:prelim} gives the notation and the auxiliary semirings.
In Section~\ref{sec:criterion} we prove the sufficient condition.
Section~\ref{sec:application} verifies its hypotheses for $\A$ and proves
Theorem~\ref{thm:main}. The structural argument distinguishes a square
that is not idempotent from a semiring in which every square is idempotent.
In the latter case the decisive distinction is whether the multiplicative
idempotents form a subsemiring. Their failure to do so leaves at most four
configurations, each determined directly from the semiring axioms. Outside $G_3$, we obtain a stronger absorption property in which
the left coefficients of the local words are arbitrary. The congruence
condition on the cycle length is needed only for $G_3$.

\section{Preliminaries}\label{sec:prelim}

Fix a countably infinite alphabet $X$. Write $X^+$ and $X^*$ for the free
semigroup and free monoid on $X$, and $X_c^+$ and $X_c^*$ for their
commutative counterparts. The empty word is denoted by $\eps$. By
distributivity and additive idempotence, an ai-semiring term is represented
by a finite nonempty subset of $X^+$. We write its elements as summands:
\[
\bu=u_1+\cdots+u_m=\{u_1,\ldots,u_m\}.
\]
The set $P_f(X^+)$ of such terms is the free ai-semiring on $X$, with union
as addition and setwise concatenation as multiplication; see
\cite[Section~1]{RLLC}. Thus terms will be denoted by bold letters and
individual words by ordinary letters. In particular, $w\in\bu$ means that
$w$ is a word of $\bu$.

For a word $w$, let $c(w)$ be its content, let $\ell(w)$ be its length,
and let $m(x,w)$ be the multiplicity of $x$ in $w$. For a term $\bu$, put
$c(\bu)=\bigcup_{w\in\bu}c(w)$. A substitution is an endomorphism of
$P_f(X^+)$; its values on variables are arbitrary nonempty terms. For
commutative ai-semirings the same notation is used with $X_c^+$ in place
of $X^+$. The natural homomorphism
\[
P_f(X^+)\longrightarrow P_f(X_c^+),\qquad \bu\longmapsto\overline{\bu},
\]
forgets the order of letters in each word. Equality of terms, denoted by
$=$, is equality of the represented sets; an identity is denoted by
$\approx$.

We write $\bv\preceq\bu$ for the identity $\bu+\bv\approx\bu$.
This notation is distinct from the literal inclusion $\bv\subseteq\bu$.
If $\bu=u_1+\cdots+u_m$ and $\bv=v_1+\cdots+v_t$, then
$\bu\approx\bv$ is equivalent, within $\AI$, to the identities
\[
u_i\preceq\bv\quad(1\leq i\leq m),\qquad
v_j\preceq\bu\quad(1\leq j\leq t).
\]
Indeed, these inequalities give both $\bu+\bv\approx\bu$ and
$\bu+\bv\approx\bv$. This replacement does not introduce variables.
It therefore suffices to consider inequalities $r\preceq\bs$ with $r$ a
word and $\bs$ a term.

\Needspace{8\baselineskip}
We shall use the following form of equational deduction. Derivability in
its statement is relative to the ai-semiring identities.

\begin{lemma}\label{lem:deduction}
An identity is a consequence of a set $\Sigma$ of identities if and only if
its sides can be joined by a finite sequence of contextual substitution
instances of members of $\Sigma$, used in either direction. For an
inequality $r\preceq\bs$, such a step has the form
\begin{equation}\label{eq:deduction}
\mathbf l+\mathbf p\varphi(\bs)\mathbf q
\quad\longleftrightarrow\quad
\mathbf l+\mathbf p\varphi(\bs)\mathbf q+
\mathbf p\varphi(r)\mathbf q.
\end{equation}
The factors $\mathbf p,\mathbf q$ may be absent, and $\mathbf l$ may be
absent. In $P_f(X_c^+)$ the two multiplicative contexts combine into a
single context $\bK$.
\end{lemma}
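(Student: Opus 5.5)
The plan is to derive the lemma from Birkhoff's completeness theorem for equational logic, specialised to the free ai-semiring $P_f(X^+)$; see \cite{BS}. By that theorem, $\Sigma\models\bu\approx\bv$ (relative to $\AI$) exactly when $(\bu,\bv)$ lies in the fully invariant congruence of $P_f(X^+)$ generated by $\Sigma$. The first step is to describe this congruence. Close the set of pairs $(\varphi(\mathbf a),\varphi(\mathbf b))$, for $\mathbf a\approx\mathbf b$ in $\Sigma$ and $\varphi$ a substitution, under the unary polynomial operations of $P_f(X^+)$. The reflexive, symmetric, transitive closure of the result is the fully invariant congruence, and its pairs are precisely the finite chains of contextual substitution instances used in either direction. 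The key observation is that every unary polynomial of an ai-semiring has the form $\mathbf t\mapsto \mathbf l+\sum_i \mathbf p_i\mathbf t\mathbf q_i$, but a single elementary step only needs $\mathbf t\mapsto \mathbf l+\mathbf p\mathbf t\mathbf q$. The reason is that a step with several occurrences can be split into consecutive steps, each rewriting one occurrence at a time. This splitting is legitimate because addition is idempotent and commutative, so intermediate terms are still terms of $P_f(X^+)$. The possible absence of $\mathbf l,\mathbf p,\mathbf q$ just reflects that the signature has no constants, so a missing factor corresponds to the empty word. This gives the first sentence of the lemma.

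For the displayed form \eqref{eq:deduction}, take a member $r\preceq\bs$, that is, $\bs+r\approx\bs$. Its contextual instance is
\[
\mathbf l+\mathbf p(\varphi(\bs)+\varphi(r))\mathbf q \;\longleftrightarrow\; \mathbf l+\mathbf p\varphi(\bs)\mathbf q .
\]
Distributivity rewrites the left side as $\mathbf l+\mathbf p\varphi(\bs)\mathbf q+\mathbf p\varphi(r)\mathbf q$, so the step has exactly the stated shape. By the preceding remark in the paper, each identity of $\Sigma$ can be replaced by finitely many inequalities $r\preceq\bs$ with $r$ a word, without changing the consequences. Hence one may assume every member of $\Sigma$ already has this form. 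The last sentence of the lemma is immediate: in $P_f(X_c^+)$ multiplication is commutative, so $\mathbf p\varphi(\cdot)\mathbf q=\bK\varphi(\cdot)$ with $\bK=\mathbf p\mathbf q$ (or $\bK$ absent).

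I do not expect a real obstacle. The only care needed is the splitting of multi-occurrence polynomials into single-occurrence steps, and checking that no constants are required. Both follow routinely from idempotence and from allowing the factors $\mathbf l,\mathbf p,\mathbf q$ to be absent.
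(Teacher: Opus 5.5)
Your proposal is correct and follows essentially the paper's route: the completeness theorem (cited in the paper via \cite{BS} and \cite[Lemma~1.1]{RLLC}), reduction to contexts with a single occurrence of the argument, and distributivity to obtain \eqref{eq:deduction}, with $\bK=\mathbf p\mathbf q$ in the commutative case. One minor slip: unary polynomials need not have the form $\mathbf l+\sum_i\mathbf p_i\mathbf t\mathbf q_i$ (consider $\mathbf t\mapsto\mathbf t\mathbf c\mathbf t$); your occurrence-by-occurrence splitting still covers them and does not actually need idempotence, or you can start, as the paper does, from composites of basic translations, which visibly have the form $\mathbf l+\mathbf p\mathbf t\mathbf q$.
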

\begin{proof}
The description by contextual instances is the usual completeness theorem
for equational logic; in the ai-semiring setting it is recorded, for
example, in~\cite[Lemma~1.1]{RLLC}. The displayed form follows by expanding
a context with one distinguished occurrence of its argument. Starting
with that occurrence, addition contributes to $\mathbf l$, and left or
right multiplication contributes to the corresponding multiplicative
context, with distributivity at each stage. Substitution of $\varphi(\bs)$
and $\varphi(\bs)+\varphi(r)$ gives~\eqref{eq:deduction}. In the
commutative case the contexts multiply to $\bK=\mathbf p\mathbf q$.
\end{proof}

An absent factor is represented by $\eps$, and an absent additive context
by the empty set. These conventions concern contexts only: neither is an
allowed value of a variable under a substitution.

We next specify four commutative ai-semirings. Let
\[
H=\{1,a,a^2,\infty\},\qquad 1<a<a^2<\infty,
\]
with maximum as addition. Set $a^0=1$, $a^3=\infty$, and define
\[
a^ia^j=a^{\min\{i+j,3\}}\qquad(0\leq i,j\leq3).
\]
Let $D$ be the two-element distributive lattice, with join as addition and
meet as multiplication. For $p\in\{2,3\}$, let $G_p$ be the flat extension
of the cyclic group $C_p=\langle g\rangle$. Thus
\[
G_p=C_p\cup\{\infty\},\qquad
u+v=\begin{cases}u,&u=v,\\ \infty,&u\ne v,\end{cases}
\]
and multiplication extends the group multiplication with $\infty$
absorbing. The element names do not add constants to the signature.

The multiplication of $H$ is associative by truncated addition of
exponents and is order preserving on the chain. Hence it preserves joins
in each argument. For $G_p$, multiplication by a group element is a
permutation of $C_p$ fixing $\infty$, so it preserves the flat addition.
Multiplication by $\infty$ is constant. These observations verify the
ai-semiring axioms. The Cayley tables of $H$ and $G_3$ are given in
Table~\ref{tab:aux}.

\begin{table}[htbp]
\caption{The auxiliary semirings $H$ and $G_3$.}\label{tab:aux}
\centering
\renewcommand{\arraystretch}{1.06}
\setlength{\tabcolsep}{6pt}
$\begin{array}{c|cccc}
+&1&a&a^2&\infty\\\hline
1&1&a&a^2&\infty\\
a&a&a&a^2&\infty\\
a^2&a^2&a^2&a^2&\infty\\
\infty&\infty&\infty&\infty&\infty
\end{array}\qquad
\begin{array}{c|cccc}
\cdot&1&a&a^2&\infty\\\hline
1&1&a&a^2&\infty\\
a&a&a^2&\infty&\infty\\
a^2&a^2&\infty&\infty&\infty\\
\infty&\infty&\infty&\infty&\infty
\end{array}$
\par\medskip
$\begin{array}{c|cccc}
+&1&g&g^2&\infty\\\hline
1&1&\infty&\infty&\infty\\
g&\infty&g&\infty&\infty\\
g^2&\infty&\infty&g^2&\infty\\
\infty&\infty&\infty&\infty&\infty
\end{array}\qquad
\begin{array}{c|cccc}
\cdot&1&g&g^2&\infty\\\hline
1&1&g&g^2&\infty\\
g&g&g^2&1&\infty\\
g^2&g^2&1&g&\infty\\
\infty&\infty&\infty&\infty&\infty
\end{array}$
\end{table}

Put
\begin{equation}\label{eq:T}
\T=\V(H,D,G_2,G_3).
\end{equation}

\begin{lemma}\label{lem:inclusion}
Every ai-semiring of order at most four belongs to $\A$. In particular,
$\T\subseteq\A\cap\CAI$.
\end{lemma}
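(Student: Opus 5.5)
The plan is to show that every ai-semiring of order at most four lies in the variety generated by the four-element ai-semirings. The four-element ones are generators by definition. For an ai-semiring $S$ with $|S|<4$, I will embed $S$ as a subalgebra of some four-element ai-semiring $S'$. Then $S\in\mathsf{S}\V\{S'\}\subseteq\A$, because varieties are closed under subalgebras.

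For the embedding I adjoin new elements, keeping the ai-semiring axioms, until the order reaches four. The first choice is an absorbing top: put $S^\infty=S\cup\{\infty\}$, with $x+\infty=\infty$ and $x\infty=\infty x=\infty$ for all $x$.
\begin{itemize}
\item The new addition is a semilattice, since it is just $S$ with a new maximum adjoined.
\item Multiplication stays associative, because $\infty$ is a multiplicative zero.
\item Distributivity holds. If any argument equals $\infty$, both sides equal $\infty$. Otherwise we are inside $S$.
\end{itemize}
Thus $S^\infty$ is an ai-semiring of order $|S|+1$ that contains $S$ as a subsemiring. Iterating this construction, $S\subseteq (S^\infty)^\infty\subseteq\cdots$, and after $4-|S|$ steps we reach order exactly four. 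There is one point to check: when iterating, the new element $\infty'$ must absorb the previous $\infty$. It does, since we simply apply the same construction to the larger semiring. So every ai-semiring of order $1$, $2$ or $3$ embeds in one of order four, which proves the first assertion.

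For the second assertion, each of $H$, $D$, $G_2$, $G_3$ has order at most four: $H$ and $G_3$ have four elements, $G_2$ has three, and $D$ has two. The multiplication of each is commutative, as shown in the tables and definitions. By the first part, each generator of $\T$ therefore lies in $\A$, so $\T\subseteq\A$. Also $\CAI$ is a variety containing all four generators, so $\T\subseteq\CAI$. Together these give $\T\subseteq\A\cap\CAI$.

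The only step needing any care is the distributivity check for $S^\infty$ in the mixed cases. It amounts to noting that $\infty$ is both additively and multiplicatively absorbing, so any expression containing it evaluates to $\infty$ on both sides. I expect no real obstacle. The main thing to watch is that the construction never requires $S$ itself to have a zero or an identity, so it applies uniformly to every small ai-semiring.
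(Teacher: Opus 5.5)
Your proposal is correct and follows the same route as the paper: adjoin an element that is absorbing for both operations, note that every associativity or distributivity instance involving it evaluates to that element on both sides, and iterate until the order reaches four. Your explicit check that $H,D,G_2,G_3$ have commutative multiplication, which gives $\T\subseteq\CAI$, spells out a point the paper leaves implicit.
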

\begin{proof}
An ai-semiring $S$ embeds in an ai-semiring with one more element by
adjoining $\omega$ and making it absorbing for both operations. The old
operations are unchanged. An associativity or distributivity instance
involving $\omega$ has value $\omega$ on both sides; the remaining
instances hold in $S$. Addition remains commutative and idempotent.
Iteration gives an embedding into a four-element ai-semiring whenever
$|S|<4$. Apply this to the four algebras in~\eqref{eq:T}.
\end{proof}

For a commutative word $w$ and a finite set $Y$ containing $c(w)$, denote
its exponent vector by
\[
\nu(w)=(m(x,w))_{x\in Y}\in\mathbb Z_{\geq0}^{Y}.
\]
Its reduction modulo a prime $p$ is denoted by $\nu_p(w)$. An affine
combination of vectors over $\F_p$ is a linear combination whose
coefficients have sum one.

\begin{lemma}\label{lem:characterization}
Let $r$ be a nonempty commutative word and
$\bu=u_0+\cdots+u_t$ a commutative term.
\begin{enumerate}
\item The inequality $r\preceq\bu$ holds in $D$ if and only if
$c(u_i)\subseteq c(r)$ for some $i$.
\item For $p\in\{2,3\}$, the inequality $r\preceq\bu$ holds in $G_p$
if and only if $c(r)\subseteq c(\bu)$ and
\begin{equation}\label{eq:affine}
\nu_p(r)\in\Aff_{\F_p}\{\nu_p(u_0),\ldots,\nu_p(u_t)\}.
\end{equation}
\end{enumerate}
\end{lemma}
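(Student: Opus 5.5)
The plan is to unwind the meaning of $r\preceq\bu$ in each algebra directly. The inequality $r\preceq\bu$ means $r+\bu\approx\bu$. It holds in an ai-semiring $S$ precisely when, for every assignment $\theta\colon c(r)\cup c(\bu)\to S$, we have $\theta(r)\le\theta(u_0)+\cdots+\theta(u_t)$ in the additive order. We only need to evaluate words under assignments, since $r$ and the $u_i$ are commutative words and $D$, $G_p$ are commutative.

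For part (1), in $D=\{0,1\}$ the value of a word is the meet of the values of its variables. Thus $\theta(w)=1$ iff $\theta(x)=1$ for all $x\in c(w)$, and the inequality fails iff some assignment has $\theta(r)=1$ and all $\theta(u_i)=0$.
\begin{itemize}
\item[] If $c(u_i)\subseteq c(r)$ for some $i$, then $\theta(r)=1$ forces $\theta(u_i)=1$, so the inequality holds.
\item[] Conversely, if no $c(u_i)\subseteq c(r)$, assign $1$ exactly to the variables in $c(r)$. Then $\theta(r)=1$, while each $u_i$ contains a variable outside $c(r)$, so $\theta(u_i)=0$.
\end{itemize}

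For part (2), in $G_p$ the sum $\sum\theta(u_i)$ equals $\infty$ unless all $\theta(u_i)$ coincide in $C_p$. The order is flat: $x\le y$ iff $x=y$ or $y=\infty$. So the inequality holds iff, for every assignment with all $\theta(u_i)$ equal to a common group element $h$, we have $\theta(r)=h$.

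Any assignment sending a variable of $c(\bu)$ to $\infty$ makes the right side $\infty$, and such assignments can be ignored.

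\emph{Necessity of $c(r)\subseteq c(\bu)$.} Suppose some $x\in c(r)\setminus c(\bu)$. Send every variable of $c(\bu)$ to $1$ and $x$ to $\infty$. The right side is $1$ and the left side is $\infty$, so the inequality fails.

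\emph{Translation to linear algebra.} Now assume $c(r)\subseteq c(\bu)$, and take $Y=c(\bu)$. Assignments into $C_p$ correspond to vectors $\alpha\in\F_p^{Y}$ via $\theta(x)=g^{\alpha_x}$, and then $\theta(w)=g^{\langle\alpha,\nu_p(w)\rangle}$. The condition becomes: for every $\alpha$ and $c\in\F_p$ with $\langle\alpha,\nu_p(u_i)\rangle=c$ for all $i$, we have $\langle\alpha,\nu_p(r)\rangle=c$.

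Pass to homogenized vectors $\hat v=(v,1)\in\F_p^{Y}\times\F_p$ and functionals $(\alpha,-c)$. The condition says every functional vanishing on all $\hat\nu_p(u_i)$ vanishes on $\hat\nu_p(r)$. By linear duality over the field $\F_p$, this holds iff $\hat\nu_p(r)$ lies in the span of the $\hat\nu_p(u_i)$. That is, $\nu_p(r)=\sum\lambda_i\nu_p(u_i)$ with $\sum\lambda_i=1$, which is exactly \eqref{eq:affine}.

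The main point needing care is this duality step, specifically that every functional $(\alpha,\beta)$ is realized by an actual assignment. This holds because $\beta$ can be absorbed as $-c$ with $c$ arbitrary. One should also check that variables in $c(\bu)\setminus c(r)$ cause no issue, and they do not, since $Y=c(\bu)$ already includes them.
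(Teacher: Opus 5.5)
Your proof is correct and follows the paper's argument: the same indicator valuation of $c(r)$ in $D$, the same $\infty$-valuation for content inclusion in $G_p$, and the same encoding of group valuations by $\alpha\in\F_p^Y$ followed by linear duality. The only difference is in the linear algebra. You homogenize to $(\nu_p(w),1)$ and use the double annihilator, which gives both directions at once. The paper instead uses the kernel of the matrix with rows $\nu_p(u_i)-\nu_p(u_0)$ for necessity, and checks sufficiency separately by a direct exponent computation.
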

\begin{proof}
In $D$, a word has value one precisely when all its variables have value
one. The stated content inclusion is therefore sufficient. For necessity,
assign one to the variables of $r$ and zero to the others.

For $G_p$, a variable of $r$ outside $c(\bu)$ can be assigned $\infty$
while all other variables receive $1$, disproving the inequality. Thus
content inclusion is necessary. Choose a finite alphabet $Y$ containing
both contents. A valuation in the group part is specified by
$x\mapsto g^{\alpha_x}$, where $\alpha\in\F_p^Y$, and the value of $w$
is $g^{\nu_p(w)\cdot\alpha}$.

Let $M$ be the matrix with rows $\nu_p(u_i)-\nu_p(u_0)$,
$1\leq i\leq t$. If $\alpha\in\ker M$, all words of $\bu$ have the
same group value. Distinct group elements are incomparable in the additive
order, so validity forces
\[
(\nu_p(r)-\nu_p(u_0))\cdot\alpha=0\qquad(\alpha\in\ker M).
\]
The orthogonal complement of $\ker M$ is the row space of $M$, by
rank--nullity. This yields~\eqref{eq:affine}.

Conversely, assume the two conditions. Under a valuation for which
$\bu=\infty$, absorption is automatic. Otherwise all words $u_i$ take a
common group value $g^b$, and every variable of $\bu$, hence of $r$,
has a group value. Write $\nu_p(r)=\sum_i\lambda_i\nu_p(u_i)$ with
$\sum_i\lambda_i=1$. The exponent of the value of $r$ is
$\sum_i\lambda_i b=b$. Thus $r$ has the same value as $\bu$.
\end{proof}

\section{A sufficient condition for the nonfinite basis property}
\label{sec:criterion}

Let
\[
\Ncal=\{n\in\mathbb N:n\geq7,\ n\equiv1\pmod3\}.
\]
For $n\in\Ncal$, use pairwise distinct variables
$x_1,\ldots,x_n,y_1,\ldots,y_n,z$, with the subscripts on the $y$-variables
read modulo $n$. Define
\begin{align}
p_n&=x_1^2x_2^2\cdots x_n^2,& a_n&=x_1^2z,& q_n&=p_nz,
\label{eq:words}\\
\bfam_n&=p_n+a_n+\sum_{i=1}^n y_i y_{i+1}y_{i+2}x_i^2z.
\label{eq:family}
\end{align}
Here the products have the displayed order; no commutativity is assumed.
Let $\sigma_n$ denote the identity
\begin{equation}\label{eq:sigma}
q_n\preceq\bfam_n.
\end{equation}

\begin{theorem}\label{thm:criterion}
Let $\mathcal V$ be an ai-semiring variety containing $H,D,G_2$, and
$G_3$. If $\mathcal V$ satisfies $\sigma_n$ for infinitely many
$n\in\Ncal$, then $\mathcal V$ has no equational basis whose identities
involve a bounded number of variables. In particular, $\mathcal V$ is
nonfinitely based.
\end{theorem}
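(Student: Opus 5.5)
The approach is the standard isolated-class argument. Fix $k$, and let $\Sigma$ be the set of all identities of $\mathcal V$ in at most $k$ variables. Choose $n\in\Ncal$ with $n$ much larger than $k$ (say $n>3k+3$) such that $\mathcal V\models\sigma_n$. We will show that $\sigma_n$, which by the discussion before Lemma~\ref{lem:deduction} is equivalent to $\bfam_n\approx\bfam_n+q_n$, is not a consequence of $\Sigma$. By Lemma~\ref{lem:deduction}, it suffices to find a property of terms that holds for $\bfam_n$, fails for $\bfam_n+q_n$, and is preserved by every step~\eqref{eq:deduction} with an instance of a member of $\Sigma$. Since $\mathcal V\supseteq\T$, every member of $\Sigma$ holds in $H$, $D$, $G_2$ and $G_3$, and Lemma~\ref{lem:characterization} turns these into combinatorial constraints. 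The natural invariant is: \emph{a term $\mathbf w$ with $\mathbf w\approx\bfam_n$ in $\T$ does not contain $q_n$, or any word whose commutative image is $\overline{q_n}$}. Because the words of $\bfam_n$ are fixed, I would work with the commutative images throughout.

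First I would isolate the class of $\overline{q_n}$, in the sense that any derivation step producing $q_n$ must be tightly constrained. Consider a step in which some word $\bK\varphi(r)$ of $\bK\varphi(\bs)+\bK\varphi(r)$ equals $\overline{q_n}$, where $r\preceq\bs$ holds in $\mathcal V$ and $\bK\varphi(\bs)\subseteq\mathbf w$. The semiring $H$ bounds multiplicities. Using valuations into $\{1,a,a^2,\infty\}$, the inequality $r\preceq\bs$ in $H$ forces, for every choice of words from the images, some word of $\bK\varphi(\bs)$ in which each variable has multiplicity at most its multiplicity in $q_n$, with matching behaviour for multiplicities $0$, $1$ and $2$. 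The semiring $D$ forces some word of $\bK\varphi(\bs)$ to have content inside $c(q_n)$, so the $y$-words are excluded. The semiring $G_2$ then gives parity conditions on exponent vectors via the affine condition~\eqref{eq:affine}, and $G_3$ gives the mod-$3$ conditions. Combining these, I would prove that the only words of $\mathbf w$ (which, at the relevant stage, agree with $\bfam_n$ up to harmless additions) that can appear in $\bK\varphi(\bs)$ are $p_n$, $a_n$ and the cyclic words $y_iy_{i+1}y_{i+2}x_i^2z$. I would also show that $\overline{q_n}$ is not an affine combination mod~$3$ of $\nu(p_n)$, $\nu(a_n)$ and a proper subfamily of the cyclic words. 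Only the full cyclic family cancels the $y$'s, and the hypothesis $n\equiv1\pmod 3$ is what makes the coefficients work out.

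Second comes the variable-counting step. Since $r$ and $\bs$ involve at most $k$ variables, $\varphi(\bs)$ is built from at most $k$ images $\varphi(x)$. I will argue that to cover all $n$ cyclic words $y_iy_{i+1}y_{i+2}x_i^2z$ needed by the affine condition, while keeping $D$-content and $H$-multiplicity control, some single variable $x$ of $\bs$ must have an image $\varphi(x)$ whose words have to involve many of the $y_i$ jointly. With $n>3k+3$ this forces some image word to contain consecutive $y_i$'s that occur in no single word of $\bfam_n$. That contradicts the fact that the words of $\bK\varphi(\bs)$ all lie in $\mathbf w$.

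The main obstacle is repeated variables: distinct occurrences of $x$ in a word of $\bs$ may select different words of $\varphi(x)$, so we cannot assume $\varphi$ maps variables to words. I would handle this by choosing valuations in $H$ and $G_p$ that are constant on each image $\varphi(x)$ up to the relevant invariant. Concretely, I would first show via $D$ and $H$ that all words of each relevant $\varphi(x)$ have the same content and bounded multiplicities, and then via $G_p$ that they have congruent exponent vectors. After that, any mixed selection behaves like a uniform one.
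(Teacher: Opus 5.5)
Your frame matches the paper's: pass to commutative terms, note that every term in a derivation from $\bu_n$ stays in the $\T$-class $\{\bu_n,\bu_n+q_n\}$, and examine the first step. That step has $h\varphi(\bs)\subseteq\bu_n$ and $q_n\in h\varphi(r)$ for some $h\in\bK$ and some $r\preceq\bs$ with at most $k$ variables. Your remark that $G_3$ then forces every $b_i$ into $h\varphi(\bs)$ is also correct: apply Lemma~\ref{lem:characterization}(2) to $q_n\preceq h\varphi(\bs)$, and all coefficients of the $b_i$ come out equal to $1$.

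\textbf{The gap is the variable-counting step, which does not work.} Covering all $n$ cyclic words by $h\varphi(\bs)$ needs only two variables, because an image $\varphi(v)$ is an arbitrary term. For example, $\bs=v_0v$ with $\varphi(v_0)=z$ and $\varphi(v)=\sum_i y_iy_{i+1}y_{i+2}x_i^2$ gives $\varphi(\bs)=\sum_i b_i\subseteq\bu_n$. No image word here contains $y$'s that fail to co-occur in $\bu_n$. So no bound can come from $\bs$ and $\varphi$ alone; it must come from how $q_n$ is assembled from $r$. Moreover, $q_n$ has no $y$'s, so the words selected to produce $q_n$ carry none, and the count has to be made on the $x$'s.

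\textbf{The missing idea is the content of Lemma~\ref{lem:substitution}.}
\begin{enumerate}
\item Write $q_n=h\prod_{v\in c(r)}w_v^{m(v,r)}$ with $w_v\in\varphi(v)$. This needs $m(v,r)\le 2$. For a squared variable whose two occurrences select $w,w'$, it also needs the comparison $2\nu(q_n)=\nu(kw^2)+\nu(k(w')^2)$ of the two diagonal products, which lie in $h\varphi(r)\subseteq\bu_n+q_n$ and force $w=w'$.
\item Show $z\notin c(h)$, so exactly one $v_0$ has $z\in c(w_{v_0})$.
\item For each other $v\in c(r)$, the valuation $v_0\mapsto a^2$, $v\mapsto a$ in $H$ forces a word of $\bs$ containing both $v_0$ and $v$. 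Hence $hw_{v_0}w_v$ divides $a_n$ or some $b_i$, each of which has a single distinct $x$-variable.
\item This yields $h=\eps$, $w_{v_0}=z$, and one distinct $x_i$ per remaining variable, whence $|c(r)|\ge n+1$.
\end{enumerate}

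Your proposed treatment of repeated variables is also not viable. Words of one image need not share content or residues of exponent vectors. For instance, $\bs=v_0v$ with $\varphi(v_0)=z$ and $\varphi(v)=x_1^2+y_1y_2y_3x_1^2$ gives $\varphi(\bs)=a_n+b_1\subseteq\bu_n$. The paper needs uniformity only along the single expansion that produces $q_n$, and obtains it by the diagonal-product comparison above.
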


To prove the theorem we work with commutative terms. In this setting the
symbols $p_n,a_n,q_n$ denote the commutative images of the words
in~\eqref{eq:words}. Put
\begin{equation}\label{eq:u}
b_i=x_i^2y_i y_{i+1}y_{i+2}z,\qquad
\bu_n=p_n+a_n+\sum_{i=1}^n b_i=\overline{\bfam_n}.
\end{equation}
The words in $\bu_n+q_n$ are pairwise distinct. We shall characterize the
terms equivalent to $\bu_n$ in $\T$ and then examine substitutions into
these terms.

\begin{lemma}\label{lem:validT}
The variety $\T$ satisfies $q_n\preceq\bu_n$ for every $n\in\Ncal$.
\end{lemma}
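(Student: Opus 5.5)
Since $\T=\V(H,D,G_2,G_3)$, it suffices to verify $q_n\preceq\bu_n$ in each of the four generators separately. Everything is commutative, so we may work with the commutative words. For $D$ and $G_p$ we use Lemma~\ref{lem:characterization}. For $H$ we argue directly from valuations.

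\emph{The algebra $D$.} We need a summand of $\bu_n$ whose content lies in $c(q_n)=\{x_1,\ldots,x_n,z\}$. The word $a_n=x_1^2z$ works, so Lemma~\ref{lem:characterization}(1) gives the inequality.

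\emph{The algebras $G_p$.} Content inclusion is clear, since $c(q_n)\subseteq c(\bu_n)$. For the affine condition we look for a combination of exponent vectors with coefficients summing to one. Over the alphabet $Y=\{x_i,y_i,z\}$ we have
\[
\nu(q_n)=\nu(p_n)+\nu(a_n)-2e_{x_1}.
\]
\begin{itemize}
\item For $p=2$: $\nu_2(p_n)=0$, $\nu_2(a_n)=e_z$ and $\nu_2(q_n)=e_z$. Hence $\nu_2(q_n)=\nu_2(a_n)$, which is trivially affine.
\item For $p=3$: $\nu_3(q_n)=2\sum_i e_{x_i}+e_z$. We use the $b_i$ to remove the $y$'s. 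Summing $\sum_i\nu_3(b_i)$ gives $2\sum e_{x_i}+3\sum e_{y_j}+n e_z$, because each $y_j$ occurs in exactly three of the $b_i$. Modulo $3$ this is $2\sum e_{x_i}+ne_z=2\sum e_{x_i}+e_z$, using $n\equiv1\pmod 3$. The coefficient sum is $n\equiv1$, so $\nu_3(q_n)$ is an affine combination.
\end{itemize}
This is exactly where the congruence condition on $n$ is used.

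\emph{The algebra $H$.} Fix a valuation and suppose $\bu_n$ has value below $\infty$; otherwise there is nothing to prove. Then $p_n\le a^2$, and since $p_n=\prod x_i^2$ and squares of non-unit elements are at least $a^2$, at most one $x_i$ differs from $1$.
\begin{itemize}
\item If some $x_i$ with $i\ne1$ equals $a$, then $p_n=a^2$. Also $a_n=x_1^2z<\infty$ forces $z$ to be at most $a^2$, so $q_n=a^2z$. One checks that the value of $b_i$ (with $x_i^2=a^2$) already dominates $q_n$: $b_i\ge a^2z=q_n$, since the remaining factors are at least $1$.
\item If the non-unit variable is $x_1$, or all $x_i=1$, then $q_n=p_nz$ equals $x_1^2z=a_n$ (because $p_n=x_1^2$), so $q_n\le a_n$.
\end{itemize}
Hence $q_n\le\bu_n$ in $H$. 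The only step requiring care is this case analysis on the truncated chain, together with the bookkeeping of the multiplicity-three covering in the $G_3$ computation.
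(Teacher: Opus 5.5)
Your proof is correct and follows essentially the same route as the paper: you verify the inequality in each of $H,D,G_2,G_3$ separately, and the $G_3$ case rests on the same observation that the $b_i$ together cover each $y_j$ exactly three times and contribute $z^n$ with $n\equiv1\pmod3$. The differences are cosmetic. You pass through Lemma~\ref{lem:characterization} for $D$ and $G_p$, exhibiting $a_n$ and the affine combinations $\nu_2(q_n)=\nu_2(a_n)$ and $\nu_3(q_n)=\sum_i\nu_3(b_i)$, where the paper evaluates valuations directly; and in $H$ you split cases by which $x_i$ is not $1$, where the paper splits by whether $z=1$ or all $x_i=1$.
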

\begin{proof}
In $D$, $q_n=p_nz\leq p_n$. For a valuation in $H$, if $z=1$ then
$q_n=p_n$, and if all $x_i=1$ then $q_n=a_n=z$. Otherwise some
$x_i\geq a$ and $z\geq a$, whence $x_i^2z=\infty$ and $b_i=\infty$.
This proves absorption in $H$.

Consider a valuation in $G_p$. Unless $\bu_n=\infty$, all its words
have a common group value $c$. Every variable occurs in $\bu_n$, so all
variables then take group values. When $p=2$, the squares in $p_n$ give
$p_n=1$, hence $c=1$; the equality $a_n=c$ gives $z=1$, and $q_n=c$.
When $p=3$, multiplying the equalities $b_i=c$ gives
\[
c^n=\prod_{i=1}^n b_i
=p_n\left(\prod_{i=1}^n y_i^3\right)z^n=p_nz^n.
\]
Since $n\equiv1\pmod3$ and $p_n=c$, it follows that $c=cz$. Cancellation
in the group yields $z=1$, and again $q_n=c$. Thus the inequality holds
in every generator of $\T$.
\end{proof}

\begin{lemma}\label{lem:word}
For $n\in\Ncal$ and $r\in X_c^+$,
\[
\T\models r\preceq\bu_n
\quad\Longleftrightarrow\quad
r\in\bu_n+q_n.
\]
\end{lemma}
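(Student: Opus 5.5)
The implication from right to left is already available. Every word of $\bu_n$ lies below $\bu_n$ in any ai-semiring, and $q_n\preceq\bu_n$ holds in $\T$ by Lemma~\ref{lem:validT}. For the converse I fix a commutative word $r$ with $\T\models r\preceq\bu_n$ and extract necessary conditions from each generator in turn, choosing valuations that isolate one or two variables. Throughout I take the finite alphabet $Y=\{x_1,\ldots,x_n,y_1,\ldots,y_n,z\}$ from Lemma~\ref{lem:characterization}.

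\emph{Step 1: conditions from $H$.} Valuations in $H$ sending one variable to $a$ and all others to $1$ bound the multiplicities. The value of $r$ is $a^{\min\{m,3\}}$, where $m$ is the multiplicity of that variable in $r$. The largest value of $\bu_n$ is $a$ raised to the maximal multiplicity of the variable in a word of $\bu_n$. This gives
\[
m(x_i,r)\le 2,\qquad m(y_i,r)\le 1,\qquad m(z,r)\le 1 .
\]
Next I send two variables to $a$. For two $y$'s that do not lie in a common window $\{y_i,y_{i+1},y_{i+2}\}$, the left side is at least $a^2$, while every word of $\bu_n$ takes value at most $a$. Hence the $y$-variables of $r$ lie in one cyclic window of three consecutive indices. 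Further pairs, such as $z$ with a $y$, or $z$ with $x_j$ for $j\ne 1$, restrict the content in the same way. I will collect only the restrictions that are actually needed later.

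\emph{Step 2: $G_3$ pins down the exponent vector.} By Step~1 every coordinate of $\nu(r)$ lies in $\{0,1,2\}$. Therefore $\nu_3(r)$ determines $r$ completely. Lemma~\ref{lem:characterization}(2) with $p=3$ gives $c(r)\subseteq c(\bu_n)$ and an affine representation
\[
\nu_3(r)=\lambda\,\nu_3(p_n)+\kappa\,\nu_3(a_n)+\sum_{i}\mu_i\,\nu_3(b_i),
\qquad \lambda+\kappa+\textstyle\sum_i\mu_i=1 .
\]
The coordinates of this representation read as follows:
\begin{itemize}
\item the $y_j$-coordinate is $\mu_{j-2}+\mu_{j-1}+\mu_j$;
\item the $x_j$-coordinate is $2(\lambda+\mu_j)$, plus $2\kappa$ when $j=1$;
\item the $z$-coordinate is $\kappa+\sum_i\mu_i$.
\end{itemize}
The $y$-coordinates are governed by the circulant matrix with first row $(1,1,1,0,\ldots,0)$ over $\F_3$. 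Because $n\equiv1\pmod3$, this matrix is invertible: its determinant is a product of factors $1+\zeta+\zeta^2$ over $n$-th roots of unity $\zeta$, and none of these factors vanishes in characteristic $3$ when $3\nmid n$. Consequently the vector $(\mu_i)$ is recovered from the $y$-part of $r$. By Step~1 that $y$-part is either empty or a subset of a single window. Each such possibility gives an explicit $(\mu_i)$. The $x$-coordinates then force $\lambda$, the sum condition forces $\kappa$, and the $z$-coordinate must be consistent with these values.

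\emph{Step 3: finishing with $D$ and $G_2$.} The enumeration of Step~2 leaves a short list of candidate vectors. Among them are the genuine words $p_n$, $a_n$, $b_i$ and $q_n$. Any further candidates, such as windows with a subset of the $y$'s or wrong $x$-multiplicities, I eliminate with three tools: the $D$-condition of Lemma~\ref{lem:characterization}(1), which requires some word of $\bu_n$ to have content inside $c(r)$; the $G_2$ affine condition, in which $\nu_2(p_n)=0$, $\nu_2(a_n)=e_z$ and $\nu_2(b_i)=e_{y_i}+e_{y_{i+1}}+e_{y_{i+2}}+e_z$; and, where necessary, a further two-variable valuation in $H$.

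I expect the main obstacle to be this final case analysis. It must account for partial windows of $y$'s, for the special role of $x_1$ in $a_n$, and for the word $q_n$, which is the single surviving word outside $\bu_n$. In particular, I will have to check that the circulant inversion never produces a valid $\{0,1,2\}$-vector besides the listed ones. The condition $n\ge7$ should ensure that distinct windows do not interact.
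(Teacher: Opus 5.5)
\textbf{There is a genuine gap.} The central claim of your Step~2 is false. The circulant $C$ with first row $(1,1,1,0,\ldots,0)$ is singular over $\F_3$ for every $n$. Each row sums to $3=0$, so $C\mathbf 1=0$. In your determinant formula the factor at $\zeta=1$ is $1+1+1=0$: in characteristic $3$ one has $1+t+t^2=(t-1)^2$, and the condition $3\nmid n$ only matters in characteristic different from $3$.

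This is not a side issue, because the kernel vector $\mathbf 1$ is exactly what puts $q_n$ into the affine hull. The choices $(\lambda,\kappa,\mu)=(1,0,0)$ and $(0,0,\mathbf 1)$ give the same $x$- and $y$-coordinates, but $z$-coordinates $0$ and $n\equiv1$. Run your procedure as written: $\mu$ from the $y$-part, $\lambda$ from the $x$-part, $\kappa$ from the sum, then a forced $z$-coordinate. For an empty $y$-part with all $x_j$ squared it gives $\mu=0$, $\lambda=1$, $\kappa=0$, hence $z$-exponent $0$. You would keep $p_n$ and reject $q_n$, contradicting Lemma~\ref{lem:validT}. So you have misidentified the role of $n\equiv1\pmod 3$: it enters through $\sum_i\mu_i=n\equiv1$ along the kernel, not through invertibility.

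\textbf{How to repair it.} For $3\nmid n$, a kernel vector satisfies $\mu_{j+1}=\mu_{j-2}$, so it is both $3$- and $n$-periodic, hence constant. Thus $\ker C=\F_3\mathbf 1$, and the image of $C$ is the hyperplane $\sum_j\eta_j=0$. Together with Step~1, this forces the $y$-part of $r$ to be empty or a full window $\{y_k,y_{k+1},y_{k+2}\}$. It determines $\mu$ only modulo $\mathbf 1$ and leaves the $z$-exponent free. After $G_2$-evenness the survivors are:
\begin{itemize}
\item $x_1^2$, $a_n$, $p_n$, $q_n$ (empty $y$-part);
\item $x_k^2y_ky_{k+1}y_{k+2}$ and $b_k$ (any $k$);
\item $p_ny_1y_2y_3$ and $q_ny_1y_2y_3$ (window $k=1$).
\end{itemize}
The $D$-condition removes $x_1^2$ and $x_k^2y_ky_{k+1}y_{k+2}$. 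The last two need an $H$-valuation on a pair $x_j,y_1$ with $j\notin\{n-1,n,1\}$. That $x$--$y$ pair restriction is the one Step~1 must supply. The pairs you list ($z$ with $y_j$, $z$ with $x_j$) occur together in $b_j$ and restrict nothing.

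\textbf{Comparison with the paper.} The paper avoids the circulant altogether. It splits by which word of $\bu_n$ has content inside $c(r)$, and uses the $x$--$y$ and $y$--$y$ pair restrictions together with $G_2$-evenness. It invokes $G_3$ only when $c(a_n)\subseteq c(r)$. There $\eta_j=0$ for $j\geq4$, and the equations for $4\leq j\leq n$ alone give $\delta_2=\cdots=\delta_n$.
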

\begin{proof}
The reverse implication follows from Lemma~\ref{lem:validT}. Suppose that
$\T\models r\preceq\bu_n$. Assigning $a$ to a single variable and $1$
to the others in $H$ gives
\begin{equation}\label{eq:degrees}
\begin{gathered}
c(r)\subseteq\{x_1,\ldots,x_n,y_1,\ldots,y_n,z\},\\
m(x_i,r)\leq2,\qquad m(y_j,r)\leq1,\qquad m(z,r)\leq1.
\end{gathered}
\end{equation}
Indeed, $\bu_n$ has value $a^2$ when the distinguished variable is $x_i$,
value $a$ when it is $y_j$ or $z$, and value $1$ for a variable outside
its content. Next, assign the nonidentity group element of $G_2$ to a
single $x_i$, and $1$ to all other variables. Every word of $\bu_n$
has value $1$, so $m(x_i,r)$ is even. Hence
\begin{equation}\label{eq:even}
m(x_i,r)\in\{0,2\}.
\end{equation}

We also have the following property:
\begin{equation}\label{eq:pairs}
\text{each pair of distinct variables in $c(r)$ occurs together in a word of $\bu_n$.}
\end{equation}
To see this, the pairs not occurring together are of the form $x_i,y_j$
with $y_j\notin\{y_i,y_{i+1},y_{i+2}\}$, or are pairs of $y$-variables
not belonging to a common triple of consecutive $y$-variables. In the
first case assign $x_i=a$ and $y_j=a^2$ in $H$, with all other variables
$1$. The value of $\bu_n$ is at most $a^2$, whereas $r=\infty$ by
\eqref{eq:even}. In the second case assign $a^2$ to both $y$-variables
and $1$ to the others, obtaining the same contradiction.

By Lemma~\ref{lem:characterization}(1), some word of $\bu_n$ has content
contained in $c(r)$. We distinguish three cases.

\emph{Case 1. $c(p_n)\subseteq c(r)$.} Every $y_j$ occurs together with
only three $x$-variables in $\bu_n$. Since $n\geq7$, property
\eqref{eq:pairs} excludes all $y$-variables from $r$. Equations
\eqref{eq:degrees} and~\eqref{eq:even} now give $r=p_n$ or $r=q_n$.

\emph{Case 2. $c(b_i)\subseteq c(r)$ for some $i$.} The occurrence of
$x_i$ excludes further $y$-variables. An additional $x_j$ would have to
occur together with all of $y_i,y_{i+1},y_{i+2}$. Its three $y$-neighbours
would therefore coincide with this triple. Distinct cyclic triples have
distinct starting indices for $n\geq7$, so $j=i$. Thus $c(r)=c(b_i)$,
and the degree restrictions imply $r=b_i$.

\emph{Case 3. $c(a_n)\subseteq c(r)$.} Write
\[
m(x_i,r)=2\delta_i,\quad \delta_i\in\{0,1\},\quad\delta_1=1,
\qquad m(y_j,r)=\eta_j\in\{0,1\}.
\]
We have $m(z,r)=1$, and~\eqref{eq:pairs} gives $\eta_j=0$ for
$4\leq j\leq n$. By Lemma~\ref{lem:characterization}(2), there exist
$\lambda_P,\lambda_A,\lambda_1,\ldots,\lambda_n\in\F_3$ such that
\begin{align*}
\nu_3(r)&=\lambda_P\nu_3(p_n)+\lambda_A\nu_3(a_n)
              +\sum_{i=1}^n\lambda_i\nu_3(b_i),\\
1&=\lambda_P+\lambda_A+\sum_{i=1}^n\lambda_i.
\end{align*}
All coefficient equations in this case are over $\F_3$. The
$z$-coordinate gives $1=\lambda_A+\sum_i\lambda_i$, so
$\lambda_P=0$. The $x_i$- and $y_j$-coordinates then give
\begin{equation}\label{eq:lambda}
\lambda_i=\delta_i-\lambda_A\mathbf1_{\{i=1\}},\qquad
\eta_j=\lambda_j+\lambda_{j-1}+\lambda_{j-2}.
\end{equation}
Consequently,
\[
\delta_j+\delta_{j-1}+\delta_{j-2}=0\quad\text{in }\F_3
\qquad(4\leq j\leq n).
\]
Each $\delta_i$ is the integer zero or one, so three such entries have
sum zero modulo three only when they are all equal. The overlapping
triples imply
\[
\delta_2=\delta_3=\cdots=\delta_n.
\]
If these entries are one, Case~1 gives $r=q_n$. If they are zero,
\eqref{eq:lambda} gives $\lambda_i=0$ for $i\ne1$ and
\[
\eta_1=\eta_2=\eta_3=1-\lambda_A.
\]
These three degrees are all zero or all one. Hence $r=a_n$ or $r=b_1$.
The three cases exhaust the content inclusion supplied by $D$.
\end{proof}

\begin{lemma}\label{lem:class}
For every $n\in\Ncal$ and every commutative term $\bv$,
\[
\T\models\bv\approx\bu_n
\quad\Longleftrightarrow\quad
\bv=\bu_n\ \text{or}\ \bv=\bu_n+q_n.
\]
\end{lemma}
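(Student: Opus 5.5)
The plan is to reduce the statement to Lemma~\ref{lem:word} through the splitting of an identity into inequalities described in Section~\ref{sec:prelim}. Recall that within $\AI$ (and hence within $\CAI$) an identity $\bv\approx\bu_n$, with $\bv=v_1+\cdots+v_t$, is equivalent to the conjunction of the word inequalities $v_j\preceq\bu_n$ for all $j$ together with $w\preceq\bv$ for every word $w\in\bu_n$.

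For the forward direction, suppose $\T\models\bv\approx\bu_n$. Then $\T\models v_j\preceq\bu_n$ for each word $v_j$ of $\bv$. Lemma~\ref{lem:word} gives $v_j\in\bu_n+q_n$, so $\bv\subseteq\bu_n+q_n$ as sets. For the reverse inclusion $\bu_n\subseteq\bv$, I would pick a word $w\in\bu_n$ and try to show $w\in\bv$. Since $\bv\subseteq\bu_n+q_n$ and $\T\models w\preceq\bv$, it suffices to show that no word $w$ of $\bu_n$ satisfies $w\preceq\bs$ in $\T$ for $\bs=(\bu_n+q_n)\setminus\{w\}$. This would rule out each of $p_n$, $a_n$ and $b_i$ being missing from $\bv$; note that $q_n$ itself may be missing, which is exactly what produces the two alternatives. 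I will handle the three cases separately, using valuations in the generators.
\begin{itemize}
\item If $w=p_n$, assign $1$ to all $x_i$ and $0$ to every other variable in $D$. Then $p_n=1$, while every other word of $\bu_n+q_n$ contains $z$ and so has value $0$. This is also Lemma~\ref{lem:characterization}(1).
\item If $w=b_i$, assign $1$ to the variables in $c(b_i)$ and $0$ to the rest in $D$. Every other word of $\bu_n+q_n$ has content not contained in $c(b_i)$: $a_n$ needs $x_1$, and for $i\neq 1$ the content $c(a_n)=\{x_1,z\}$ is not contained in $c(b_i)$, since the $x$-variable of $b_i$ is $x_i$. For $i=1$, however, $c(a_n)\subseteq c(b_1)$, so $D$ fails and $b_1$ needs another valuation. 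Here I would use $G_3$ and the affine criterion: $b_1$ would have to be an affine combination of the vectors of $p_n$, $a_n$, $q_n$ and the $b_j$ with $j\ne1$. Because $y_j$ appears only in $b_{j-2},b_{j-1},b_j$, the $y$-coordinates force a cyclic system. Alternatively, I would use $H$ with $y_1=a^2$, $x_1$ and $z$ equal to $1$, and all other variables $1$. Then $b_1=a^2$; $b_n$ and $b_{n-1}$ also contain $y_1$, so they too reach $a^2$, which makes this valuation fail. A cleaner choice is an $H$-valuation giving $y_1,y_2,y_3$ the value $a$ and all other variables $1$. Then $b_1=\infty$, while every other $b_j$ contains at most two of $y_1,y_2,y_3$ (since $n\ge7$), so it has value at most $a^2$, and $p_n$, $a_n$, $q_n$ equal $1$. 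That settles $b_1$.
\item If $w=a_n$, assign $x_1=a$ and $z=a$ in $H$, with all other variables $1$. Then $a_n=a^3=\infty$, but $p_n=a^2$, $q_n=\infty$. This fails because $q_n$ is large. Instead I would use $G_2$ with $z=g$ and all other variables $1$. Then $a_n=g$, $q_n=g$, and each $b_i=g$, but $p_n=1$, so the right side is $\infty$. That fails too. The remaining route is the $\F_3$ affine computation: any affine combination giving $\nu_3(a_n)$ must kill all $y$-coordinates and all $x_i$ with $i\ne1$. Working as in Case~3 of Lemma~\ref{lem:word}, with $\lambda_A=0$ forced, this leads to a contradiction.
\end{itemize}

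The converse direction is immediate from Lemma~\ref{lem:validT}, which gives $\T\models\bu_n+q_n\approx\bu_n$.

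The main obstacle is the non-removability of $a_n$, which needs the affine computation. Let $\mu_P,\mu_Q,\mu_j$ be the coefficients on $p_n,q_n,b_j$, for $j=1,\dots,n$, in an affine combination over $\F_3$ with no $a_n$ term. The $y$-coordinates force $\mu_j+\mu_{j-1}+\mu_{j-2}=0$ for all $j$ cyclically. The $x_i$-coordinates with $i\ne1$ give $\mu_P+\mu_Q+\mu_i=0$, and the $x_1$-coordinate gives $\mu_P+\mu_Q+\mu_1=1$. Hence $\mu_i=c$ for $i\neq1$ and $\mu_1=c+1$. The triple condition on $\mu_5+\mu_4+\mu_3$ gives $3c=0$, which is automatic in $\F_3$. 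But the triple through index $1$, namely $\mu_1+\mu_n+\mu_{n-1}=3c+1=1\ne0$, gives the contradiction.
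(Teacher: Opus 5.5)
Your proof is correct and follows the paper's argument. Lemma~\ref{lem:word} gives $\bv\subseteq\bu_n+q_n$, a separating valuation for each word of $\bu_n$ shows none can be omitted, and the converse comes from Lemma~\ref{lem:validT}. Your valuations for $b_i$ ($D$ for $i\ne1$, and $y_1=y_2=y_3=a$ in $H$) differ harmlessly from the paper's choice $y_i=y_{i+2}=a$.

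Your $\F_3$ computation for $a_n$ is also valid, but $a_n$ is not the obstacle you describe. The paper uses $D$ with $x_1=z=1$ and all other variables $0$. Then $a_n=1$, while every other word of $\bu_n+q_n$ contains $x_2$, $y_1$, or some $x_j$ with $j\ne1$, and so takes the value $0$.
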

\begin{proof}
Assume $\T\models\bv\approx\bu_n$. Every word of $\bv$ is absorbed
by $\bu_n$, so Lemma~\ref{lem:word} gives
$\bv\subseteq\bu_n+q_n$. We show that no word of $\bu_n$ can be
omitted from $\bv$.

For $p_n$, evaluate in $D$ with all $x_i=1$ and all $y_i=z=0$.
Then $p_n=1$ and every other word of $\bu_n+q_n$ has value zero.
For $a_n$, evaluate in $D$ with $x_1=z=1$ and all other variables zero.
Again the selected word has value one and all the others have value zero.
Finally, for $b_i$, evaluate in $H$ with $y_i=y_{i+2}=a$ and every other
variable $1$. Only the triple starting at $i$ contains both of these
$y$-variables, since $n\geq7$. Thus $b_i=a^2$, while $b_j\leq a$
for $j\ne i$ and $p_n=a_n=q_n=1$.

Each valuation separates the selected word from the sum of all remaining
words. Since every word of $\bu_n$ must be absorbed by $\bv$, all must
occur in $\bv$. Therefore
$\bu_n\subseteq\bv\subseteq\bu_n+q_n$, leaving exactly the two stated
terms. The converse is Lemma~\ref{lem:validT}.
\end{proof}

The next lemma is the main restriction on substitutions. The assumption
on $h\varphi(\bs)$ is literal inclusion of sets of commutative words.

\begin{lemma}\label{lem:substitution}
Let $n\in\Ncal$, let $\bs\in P_f(X_c^+)$ and $r\in X_c^+$, and
suppose that $\T\models r\preceq\bs$. If a substitution $\varphi$ and
a word $h\in X_c^*$ satisfy
\begin{equation}\label{eq:hyp}
h\varphi(\bs)\subseteq\bu_n,\qquad q_n\in h\varphi(r),
\end{equation}
then $|c(r)|\geq n+1$.
\end{lemma}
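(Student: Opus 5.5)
The plan is to transport the situation into $\bu_n$ itself and then count. Write $\bs=s_1+\cdots+s_k$. Since $\T\models r\preceq\bs$ and $\T$ is closed under substitution instances and multiplication by $h$, we get
\[
\T\models h\varphi(r)\preceq h\varphi(\bs).
\]
Since $q_n\in h\varphi(r)$, this gives $\T\models q_n\preceq\bv$, where $\bv:=h\varphi(\bs)\subseteq\bu_n$.

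\emph{Step 1: $\bv=\bu_n$.} I would first show that no proper subset of $\bu_n$ absorbs $q_n$ in $\T$, by adapting the separating valuations of Lemma~\ref{lem:class}.
\begin{itemize}
\item Dropping $b_i$ should be detected in $G_3$: give the $b$-words a common value while exploiting the relation $c^n=p_nz^n$ from Lemma~\ref{lem:validT}, which must fail once a triple is missing. The affine criterion of Lemma~\ref{lem:characterization}(2) is the cleaner route. Without $b_i$, the variable $y_{i+2}$ (say) loses the equation $\eta=\lambda_i+\lambda_{i+1}+\lambda_{i+2}$, and one checks that $\nu_3(q_n)$ is no longer affine in the remaining words.
\item Dropping $a_n$ or $p_n$ should be handled by Lemma~\ref{lem:characterization}(2) for $G_2$ or $G_3$, or by a content valuation in $D$.
\end{itemize}
Hence every word of $\bu_n$ has the form $h\,w$, with $w$ a word of $\varphi(s_j)$ for some $j$.

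\emph{Step 2: restrict $h$ and the images of variables.} Because $h$ divides every word of $\bu_n$, we have $c(h)\subseteq c(p_n)\cap c(a_n)\cap c(b_1)\cap c(b_2)=\emptyset$ (for $n\ge7$). So $h=\eps$, and $\varphi(\bs)\subseteq\bu_n$ literally, with $\varphi(\bs)=\bu_n$.

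By Lemma~\ref{lem:characterization}(2) applied to $r\preceq\bs$, we have $c(r)\subseteq c(\bs)$. So every variable $t\in c(r)$ occurs in some $s_j$. Consequently every word of $\varphi(t)$ is a subword (divisor) of some word of $\bu_n$. Moreover, once one word is chosen for each other occurrence, the product lies in $\bu_n$.

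Now $q_n=\prod_{t\in c(r)}\prod_{\text{occurrences}}w_{t,\iota}$, with $w_{t,\iota}\in\varphi(t)$. The goal is to show that each $w_{t,\iota}$ meets at most one of the $n+1$ letters $x_1,\dots,x_n,z$. Moreover, distinct letters among these must come from distinct variables $t$. Together these give $|c(r)|\ge n+1$.

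\emph{Step 3: the main obstacle.} Here one must exclude a variable $t$ whose image words carry two of these letters. This could happen within one word, or across different occurrences of $t$ selecting different words of $\varphi(t)$, which is precisely the repeated-variable issue flagged in the introduction.

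\begin{itemize}
\item \emph{A word of $\varphi(t)$ divides both a word containing $z$ and $p_n$.} Then it avoids $z$ and all $y$'s. It can contain $x_i$ for at most one $i$, because any word of $\bu_n$ containing $z$ contains only one $x$-index.
\item \emph{A word of $\varphi(t)$ contains $x_i x_j$ with $i\ne j$.} Then every word of $\varphi(\bs)$ built from $t$ must be $p_n$. I would then use that $b_1,\dots,b_n$ and $a_n$ also occur in $\varphi(\bs)$, and track which $s_j$ produce them.
\item \emph{Comparing $\varphi(s_j)$ across $j$.} If two words of $\varphi(s_j)$ used different image words of $t$, they would differ only in the $t$-factor. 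Inspecting $\bu_n$ shows that no two words of $\bu_n$ differ by exchanging a factor containing two of $x_1,\dots,x_n,z$ while keeping the cofactor.
\end{itemize}

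The obstacle is making this last exchange argument airtight. I would organise it by fixing, for each $t$, the set of distinguished letters appearing in $\varphi(t)$, and showing via the multiplicity bounds $m(x_i,\cdot)\le2$ and $m(z,\cdot)\le1$ that each such set is a singleton.
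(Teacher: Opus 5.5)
Your Steps~1 and~2 are correct, and they shorten part of the paper's route. Since $\T\models q_n\preceq h\varphi(\bs)\subseteq\bu_n$ and no proper subterm of $\bu_n$ absorbs $q_n$, you get $h\varphi(\bs)=\bu_n$ and hence $h=\eps$ at once. The paper reaches $h=\eps$ only through its Claims~2 and~3.

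One correction to Step~1: deleting $p_n$ is invisible in $D$, $G_2$ and $G_3$. In $G_3$, for instance, $\nu_3(q_n)=\sum_i\nu_3(b_i)$ with coefficient sum $n\equiv1$. It must be detected in $H$, e.g.\ by $x_1=\cdots=x_n=a$ and all other variables $1$, which gives $q_n=\infty$ against $a^2$.

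The genuine gap is Step~3. It is the whole substance of the lemma, you leave it open, and the tools you propose cannot close it. Everything you have extracted so far is satisfied by data with $|c(r)|=2$: namely $\varphi(\bs)=\bu_n$, $h=\eps$, $c(r)\subseteq c(\bs)$, the multiplicity bounds, and even $\varphi(r)\subseteq\bu_n+q_n$. Take fresh variables $u,v,x,t_1,\ldots,t_n$ and set
$\bs=u+xv+\sum_i t_iv$, $r=uv$, $\varphi(u)=p_n$, $\varphi(v)=z$, $\varphi(x)=x_1^2$, $\varphi(t_i)=x_i^2y_iy_{i+1}y_{i+2}$.
Then $\varphi(\bs)=\bu_n$ and $\varphi(r)=q_n$.

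In this example $u$ carries all the $x_i$ in a single image word, so no comparison of different image words can help. The only hypothesis that fails is $\T\models r\preceq\bs$: in $H$, $v=a^2$ and $u=a$ give $r=\infty$ but $\bs=a^2$. So Step~3 must use that hypothesis again, on the formal variables.

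Your exchange principle is also false as stated. The words $p_n=x_1^2\cdot(x_2^2\cdots x_n^2)$ and $a_n=x_1^2\cdot z$ share the cofactor $x_1^2$, while the exchanged factor contains $n-1$ distinguished letters.

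What is missing are the paper's Claims~1 and~3.

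\emph{Repeated variables.} Apply Lemma~\ref{lem:word} to $\varphi(r)$, not only to $\varphi(\bs)$, to get $\varphi(r)\subseteq\bu_n+q_n$. Suppose a repeated variable selects $w\ne w'$ in an expansion $kww'=q_n$. Then the diagonal products $kw^2$ and $k(w')^2$ lie in $\varphi(r)$. Neither $w$ nor $w'$ can contain $z$, so both products get $z$ from $k$ and contain no $y$; hence they lie in $\{a_n,q_n\}$. Their exponent vectors average to $\nu(q_n)$, so both equal $q_n$ and $w=w'$. This yields $q_n=\prod_v w_v^{m_v}$ with $m_v\le2$, and a unique $v_0$ with $z\in c(w_{v_0})$. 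Selecting these words inside $\varphi(\bs)\subseteq\bu_n$ gives $m(v_0,t)\le1$ and $m(v,t)\le2$ for every $t\in\bs$.

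\emph{Co-occurrence.} For $v\ne v_0$, the valuation $v_0\mapsto a^2$, $v\mapsto a$, all else $\mapsto1$ in $H$ makes $r=\infty$. So some word of $\bs$ contains both $v_0$ and $v$. Hence $w_{v_0}w_v$ divides $a_n$ or some $b_i$, and each of these has a single $x$-index. The $w_v$ jointly supply $n\ge7$ distinct $x_i$, so $w_{v_0}=z$ and each other $w_v$ involves exactly one $x_i$. This gives $|c(r)|\ge n+1$.

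With your $h=\eps$ in hand, the paper's Claim~2 becomes unnecessary, but this co-occurrence argument cannot be avoided.
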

\begin{proof}
Each word of $h\varphi(r)$ is absorbed by $\bu_n$ in $\T$, because
$r\preceq\bs$ and $h\varphi(\bs)\subseteq\bu_n$. By
Lemma~\ref{lem:word},
\begin{equation}\label{eq:image}
h\varphi(r)\subseteq\bu_n+q_n.
\end{equation}
Also $c(r)\subseteq c(\bs)$: otherwise, in $H$, assigning $a$ to a
variable of $c(r)\setminus c(\bs)$ and $1$ to all others contradicts
$r\preceq\bs$. We use $v$ for the variables of $r$ and $\bs$.

\emph{Claim 1.} There are nonempty words $w_v\in\varphi(v)$ such that
\begin{equation}\label{eq:factor}
q_n=h\prod_{v\in c(r)}w_v^{m_v},\qquad
m_v=m(v,r)\in\{1,2\}.
\end{equation}

First, $m_v\leq2$. If $m_v\geq3$, choose the same nonempty word of
$\varphi(v)$ at all occurrences of $v$ in an expansion of
$h\varphi(r)$. Some letter would then have multiplicity at least three,
contrary to~\eqref{eq:image}.

Choose an expansion giving $q_n$. All selected words and $h$ involve
only $x_1,\ldots,x_n,z$. Suppose that the two occurrences of a variable
of multiplicity two select $w$ and $w'$. Let $k$ be the product of the
remaining selected factors, including $h$. Then $kww'=q_n$. Both
$kw^2$ and $k(w')^2$ also occur in $h\varphi(r)$. Neither $w$ nor $w'$
can contain $z$, for its diagonal product would contain $z^2$. Thus $k$
contains $z$. The diagonal products contain no $y$-variables, so by
\eqref{eq:image} each is $a_n$ or $q_n$. In integer exponent vectors,
\[
2\nu(q_n)=\nu(kw^2)+\nu(k(w')^2).
\]
Since $a_n$ is a proper divisor of $q_n$, both diagonal products must
be $q_n$. It follows that $2\nu(w)=2\nu(w')$, whence $w=w'$.
Applying this to each repeated variable proves the claim. All cancellations
here are in the free commutative monoid.

\emph{Claim 2.} The word $h$ does not contain $z$.

Suppose that $z\in c(h)$. Each $w_v$ in~\eqref{eq:factor} is then a
nonempty word in the $x$-variables. Since $c(r)\subseteq c(\bs)$, for
each $v\in c(r)$ we can choose an occurrence of $v$ in a word of $\bs$
and select $w_v$ at that occurrence. The resulting expansion, after
multiplication by $h$, belongs to $\bu_n$ and is divisible by $hw_v$.
Every word of $\bu_n$ containing $z$ has total $x$-degree two and
contains only one distinct $x_i$. Consequently, $h$ contains at most one
distinct $x_i$, and the same is true of each $w_v$.

For any $t\in\bs$, select $w_v$ at every occurrence of each
$v\in c(r)$, and arbitrary words at the other occurrences. Every selected
occurrence from $c(r)$ contributes at least one $x$-letter. Since the
expanded word, multiplied by $h$, is a word of $\bu_n$ containing $z$,
\begin{equation}\label{eq:sumdegree}
\sum_{v\in c(r)}m(v,t)\leq2\qquad(t\in\bs).
\end{equation}
Now evaluate the formal variables in $H$, assigning $a$ to all variables
of $r$ and $1$ to the others. Equation~\eqref{eq:sumdegree} gives
$\bs\leq a^2$. The inequality $r\preceq\bs$ implies $\ell(r)\leq2$.
The factorization~\eqref{eq:factor} then supplies at most
$1+\ell(r)\leq3$ distinct $x$-variables, contrary to the $n\geq7$
distinct $x$-variables of $q_n$. This proves Claim~2.

There is consequently a unique $v_0\in c(r)$ whose selected word
contains $z$, and
\begin{equation}\label{eq:v0}
m_{v_0}=1,\qquad m(z,w_{v_0})=1.
\end{equation}
Every other selected word is a nonempty word in the $x$-variables.
For every $t\in\bs$, the first condition of~\eqref{eq:hyp} implies
\begin{equation}\label{eq:sdegree}
m(v_0,t)\leq1,\qquad m(v,t)\leq2
\quad(v\in c(r)\setminus\{v_0\}).
\end{equation}
Indeed, two choices of $w_{v_0}$ would produce $z^2$, and three choices
of any other $w_v$ would produce a letter of multiplicity at least three.

There is at least one variable in $c(r)\setminus\{v_0\}$. Otherwise
$r=v_0$ and $hw_{v_0}=q_n$. An occurrence of $v_0$ in $\bs$ would give
a word of $\bu_n$ divisible by $q_n$, which is impossible.

\emph{Claim 3.} We have $h=\eps$, $w_{v_0}=z$, and each
$w_v$ with $v\ne v_0$ contains only one distinct $x$-variable.

Fix $v\in c(r)\setminus\{v_0\}$. Assign $v_0=a^2$, $v=a$ and all
other formal variables $1$ in $H$. Then $r=\infty$, so $\bs=\infty$.
By~\eqref{eq:sdegree}, this is possible only if some word of $\bs$
contains both $v_0$ and $v$. Select their words $w_{v_0}$ and $w_v$ in
an expansion. We obtain
\begin{equation}\label{eq:divides}
hw_{v_0}w_v\text{ divides a word of $\bu_n$ containing $z$.}
\end{equation}
Such a word contains only one distinct $x$-variable. If $hw_{v_0}$
contained $x_j$, equation~\eqref{eq:divides} would force every other
$w_v$ to involve only $x_j$. This contradicts~\eqref{eq:factor}.
Thus $hw_{v_0}$ contains no $x$-variable. Claim~2 and~\eqref{eq:v0}
now give $h=\eps$ and $w_{v_0}=z$. Equation~\eqref{eq:divides} also
shows that every other $w_v$ contains only one distinct $x$-variable,
proving Claim~3.

To supply all $x_1,\ldots,x_n$ in~\eqref{eq:factor}, at least $n$
distinct variables other than $v_0$ are therefore required. Hence
$|c(r)|\geq n+1$.
\end{proof}

\begin{proof}[Proof of Theorem~\ref{thm:criterion}]
Suppose that $\mathcal V$ has a basis $\Sigma$ with a variable bound $k$.
Choose $n\in\Ncal$ such that $n+1>k$ and $\mathcal V\models\sigma_n$.
The identity $\sigma_n$ is then a consequence of $\Sigma$.

Adjoin $xy\approx yx$ to the deduction system and pass to commutative
terms. This is legitimate for testing nonderivability: a deduction from
$\Sigma$ remains a deduction in the enlarged system. Moreover, all its
identities are valid in $\T$. Normalize each member of $\Sigma$ and
replace it by inequalities $r\preceq\bs$ as in Section~\ref{sec:prelim}.
The resulting set $\Sigma'$ has the same consequences modulo $\CAI$
and still uses at most $k$ variables in each identity.

By Lemma~\ref{lem:deduction}, there is a sequence of commutative terms
\[
\bu_n=\bt_0,\bt_1,\ldots,\bt_m=\bu_n+q_n
\]
whose steps are contextual instances of members of $\Sigma'$. Every
$\bt_i$ is equivalent to $\bu_n$ in $\T$. Lemma~\ref{lem:class} shows
that each is either $\bu_n$ or $\bu_n+q_n$.

Consider the first step that changes $\bu_n$. It must go from
$\bu_n$ to $\bu_n+q_n$. A contraction in~\eqref{eq:deduction} cannot
introduce a word, so the step has the form
\[
\mathbf l+\bK\varphi(\bs)
\ \longrightarrow\
\mathbf l+\bK\varphi(\bs)+\bK\varphi(r)
\]
for some $r\preceq\bs$ in $\Sigma'$. Since $q_n$ occurs only on the
right, choose $h\in\bK$ such that $q_n\in h\varphi(r)$. If the
multiplicative context is absent, take $h=\eps$. The left side gives
$h\varphi(\bs)\subseteq\bu_n$. Lemma~\ref{lem:substitution} yields
$|c(r)|\geq n+1>k$, a contradiction.

Thus there is no equational basis with a bounded number of variables.
Every finite basis has such a bound, proving nonfinite basability.
\end{proof}

\section{A structural verification of the cyclic identities}
\label{sec:application}

It remains to prove that the identities $\sigma_n$ hold in every
four-element ai-semiring. The only case in which the cyclic arrangement
of the $y$-variables is needed is the flat extension $G_3$ of the group
of order three. For the other semirings, we prove a stronger absorption
property with arbitrary left coefficients.

For $n\geq2$ and elements $x_1,\ldots,x_n,z,t_1,\ldots,t_n$ of an
ai-semiring $S$, put
\[
s_i=x_i^2\quad(1\leq i\leq n),\qquad p=s_1s_2\cdots s_n.
\]
The stronger property is
\begin{equation}\label{eq:structural-absorption}
pz\leq p+s_1z+\sum_{i=1}^{n}t_i s_i z.
\end{equation}
Here and below the symbols in this inequality denote elements of $S$.
No commutativity of multiplication is assumed.

\begin{proposition}\label{prop:strong-absorption}
Every four-element ai-semiring not isomorphic to $G_3$ satisfies
\eqref{eq:structural-absorption} for every $n\geq2$ and every choice of
its indicated elements.
\end{proposition}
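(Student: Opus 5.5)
Following the paper's announced structure, I split first according to whether some square in $S$ fails to be idempotent.

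\emph{Case A: every square is idempotent and the idempotents $E(S)$ form a subsemiring.} Each $s_i=x_i^2$ lies in $E(S)$, so $p$ and $s_1z$ lie in the subsemiring generated by $E(S)\cup\{z\}$. Here I would look for a direct algebraic inequality. The simplest candidate is to take the summand $t_1s_1z$ and set $t_1$ equal to $s_2\cdots s_n$, but the $t_i$ are given, so this is not available. The right-hand side must instead absorb $pz$ using $p$ and $s_1z$ alone, plus whatever the summands $t_is_iz$ contribute.

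The approach I would use is to show that in these semirings one of the following holds:
\begin{itemize}
\item[(a)] $pz\le p+s_1z$ is forced outright; for instance, in a band-like setting $ef\le e+f$ for idempotents when $E(S)$ is a subsemiring (since $(e+f)^2=e+f$ gives $e+f\ge ef$ whenever $ef+fe+e+f=e+f$).
\item[(b)] The structure is so small that $p\,z$ can be compared with $p$ and $s_1z$ by inspection.
\end{itemize}
Concretely, from $(e+f)^2=e+f$ one gets $e+f+ef+fe=e+f$, so $ef\le e+f$. Applying this to $e=p$ (idempotent, as a product of idempotents in the subsemiring case) and $f=s_1z$, when $s_1z$ is idempotent, yields $p\,s_1z\le p+s_1z$. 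We also need $pz$ versus $ps_1z$. Since $p=s_1\cdots s_n$ with $s_1$ idempotent, $ps_1$ is not $p$ in general. I would therefore handle this through the case distinction on whether $z$ or $s_1z$ is idempotent, using $(s_1+z)^2=s_1+z$ to get $s_1z\le s_1+z$ and $zs_1\le s_1+z$.

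\emph{Case B: all squares are idempotent but $E(S)$ is not a subsemiring.} The paper says this leaves at most four configurations, each determined from the axioms. I would classify by which failure occurs: either $e+f\notin E(S)$ (impossible, since squares of everything are idempotent implies every element is idempotent when squares are idempotent and... not quite). Precisely, the failure must be some product $ef$ of idempotents that is not idempotent. With only four elements, this pins down a small number of tables, and each I would check directly.

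\emph{Case C: some square is not idempotent.} Take $x$ with $x^2\ne x^4$. The powers of $x$ then generate a chain-like subsemigroup occupying most of the four elements, which forces rigid structure. For example, $\{x^2,x^4,\ldots\}$ yields an element $0$-like or infinity-like absorbing, or a cyclic group of order $2$ or $3$; the order-$3$ group case is precisely $G_3$, which is excluded. I would show that in the remaining configurations the large element absorbs, giving $pz\le s_1z$ or $pz\le p$ uniformly.

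The main obstacle is Case C together with the enumeration in Case B. I expect the real difficulty is proving the inequality uniformly, without brute enumeration of the roughly $866$ four-element tables. I would first try to extract a universal "top" element that every product of $\ge2$ squares with $z$ lies below, and fall back on case analysis of the possible power-structures of $x$.
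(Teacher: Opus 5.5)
Your three-way split matches the paper's, but each case is only a plan, and two of its concrete commitments are false. In Case C you aim to show $pz\le p$ or $pz\le s_1z$ uniformly. This already fails in $H$, where $a^2$ is a nonidempotent square: for $n=2$, $x_1=1$, $x_2=z=a$ one has $pz=a^3=\infty$ but $p+s_1z=a^2$. The same failure occurs in your Case A. Take the chain $1<b<w<h$ with $1$ an identity, $b^2=b$, and all other products of elements of $\{b,w,h\}$ equal to $h$. Every square is idempotent and $E(S)=\{1,b,h\}$ is a subsemiring, yet $x_1=1$, $x_2=b$, $z=w$ give $pz=h\not\le w=p+s_1z$.

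So the summands $t_is_iz$ are indispensable. Since the $t_i$ are arbitrary, you need the idea that makes them usable. In each configuration where $p$ and $s_1z$ do not suffice, the paper finds an index $i$ with $s_iz=h$, where $th=h$ for every $t\in S$ ($h$ is a multiplicative zero, or the element of \eqref{eq:right-zero}). Then $t_is_iz=h=pz$. Your tool $(s_1+z)^2=s_1+z$ also presupposes $z\in E(S)$, which is the easy case. The substance of Lemma~\ref{lem:closed-idempotents-absorption} is $z\notin E(S)$, organized by $|E(S)|$.

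In Case B your reduction is backwards. You claim $e+f\notin E(S)$ is impossible and that the failure must be a nonidempotent product of idempotents. Your supporting heuristic, that idempotent squares make every element idempotent, is also false. Take $S=\{e,f,u,h\}$ with $e,f$ incomparable, $e+f=u<h$, $e^2=e$, $f^2=f$, and all other products equal to $h$. Every square is idempotent and $E(S)=\{e,f,h\}$ is multiplicatively closed, but $e+f=u\notin E(S)$. The paper proves the reverse in Lemma~\ref{lem:idempotent-closure}: if $ab\notin E(S)$ for idempotents $a,b$, then $a+b=ab$. So the failure is always additive. This yields the rigid shape $e+f=u<h$, $u^2=h$, $ef=h$, and then the four cases $fe\in\{e,f,u,h\}$.

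Finally, Case C needs the subgroup-join argument: the sum of a nontrivial finite subgroup is an idempotent outside it. This is what excludes, for example, index $3$ with period $2$. It reduces to $\langle a\rangle=\{a,a^2,a^3\}$ with $a^4=a^3$, or $S=\langle a\rangle$ with $a^5=a^4$, and then to the five possible actions $(av,va)$. Nothing in your sketch supplies this structural control.
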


We prepare several elementary structural observations. Write
\[
E(S)=\{e\in S:e^2=e\}
\]
for the set of multiplicative idempotents.

\begin{lemma}\label{lem:small-powers}
Let $S$ be a four-element ai-semiring.
\begin{enumerate}
\item If its multiplicative reduct contains a subgroup of order three,
then $S$ is isomorphic to $G_3$.
\item If $S$ is not isomorphic to $G_3$, then $a^4\in E(S)$ for every
$a\in S$. If $a^2\notin E(S)$, then either
\[
\langle a\rangle=\{a,a^2,a^3\},\qquad a^4=a^3,
\]
or
\[
S=\langle a\rangle=\{a,a^2,a^3,a^4\},\qquad a^5=a^4,
\]
where the displayed powers in each set are distinct.
\end{enumerate}
\end{lemma}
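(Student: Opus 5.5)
The proof splits into the two items. Throughout, I would work only with the multiplicative semigroup of $S$, the additive order, and one device: if $C\subseteq S$ is a multiplicative subgroup with identity $e$, the sum $s$ of some elements of $C$ is carried to itself under left multiplication by suitable group elements. A group element cannot fix anything inside $C$ except when it is $e$.

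\emph{Item (1).} Let $C=\{e,g,g^2\}$ be the subgroup, with identity $e$, and let $w$ be the fourth element. I first show that each sum of two distinct elements of $C$ equals $w$.

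1. Suppose $e+g=g^k\in C$. Then $e\le g^k$, and multiplying by powers of $g^k$ preserves the order, so
\[
e\le g^k\le g^{2k}\le g^{3k}=e .
\]
2. By antisymmetry $g^k=e$, hence $g\le e+g=e$. Multiplying by $g^2$ gives $e\le g^2$, and multiplying again gives $g^2\le g^4=g$.
3. Thus $g\le e\le g^2\le g$, so $g=e$, a contradiction.
4. The other pairwise sums are images of $e+g$ under multiplication by $g$ or $g^2$. Hence every pairwise sum lies outside $C$ and equals $w$; the same follows for the sum of all three.

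Next I compute the products involving $w$ by distributivity. For example $gw=g(e+g)=g+g^2=w$, and similarly $wg=w$, $ew=we=w$, $w^2=(e+g)(e+g)=w$, and $w+x=w$ for all $x$. These identities show that $g\mapsto g$, $w\mapsto\infty$ is an isomorphism onto $G_3$.

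\emph{Item (2).} Let $\langle a\rangle$ have index $r$ and period $m$, so that $r+m-1\le|\langle a\rangle|\le4$. By (1), $m\ne3$.

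1. The element $a^k$ is idempotent if and only if $k\ge r$ and $m\mid k$. Running through the admissible pairs $(r,m)$ with $m\in\{1,2,4\}$ shows that $a^4\in E(S)$ in every case.
2. The cases with $a^2\notin E(S)$ are $(r,m)=(3,1)$, $(4,1)$, $(3,2)$, $(1,4)$. The first two are exactly the two displayed forms; in the second, $\langle a\rangle$ has four elements and so equals $S$.
3. It remains to exclude $(1,4)$ and $(3,2)$.

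For $(1,4)$, the set $S=\{e,g,g^2,g^3\}$ is a cyclic group. The sum $s$ of all four elements satisfies $gs=s$. Since $s\in S$ is a group element, cancellation gives $g=e$, a contradiction.

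For $(3,2)$, we have $S=\{a,a^2,a^3,a^4\}$, the subgroup is $\{a^3,a^4\}$ with identity $a^4$, and $a^5=a^3$. Put $s=a^3+a^4$. Then $a^3s=s$, and cancellation rules out $s\in\{a^3,a^4\}$. So $s=a^k$ with $k\in\{1,2\}$. But $sa^4=a^7+a^8=s$, whereas $a^ka^4$ lies in the group, so $sa^4\ne a^k$, a contradiction.

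The main obstacle is the order-theoretic exclusion of $e+g\in C$ in item (1), and the analogous exclusion in the case $(3,2)$. Each requires combining monotonicity of multiplication with antisymmetry or cancellation in exactly the right order. The period-and-index bookkeeping in item (2) is routine.
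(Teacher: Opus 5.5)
Your proof is correct and is essentially the paper's argument. In (1) you use the same cycle-of-inequalities trick inside the subgroup, applied directly to $e+g$ rather than first identifying the fourth element as the full subgroup sum; in (2) you use the same index--period bookkeeping, excluding $(1,4)$ and $(3,2)$ by the subgroup-sum device, where your check $sa^4=s$ plays the role of the paper's remark that the sum would be a second idempotent. The passage from $e+g\notin C$ to the other pairwise sums deserves one more word, but it is immediate: multiplying $g+g^2$ by $g^2$, or $g^2+e$ by $g$, returns $e+g$, and $C$ is closed under multiplication.
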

\begin{proof}
For any finite multiplicative subgroup $K$ of $S$, let $b=\sum_{g\in K}g$.
Distributivity gives
\[
b^2=b,\qquad gb=bg=b\quad(g\in K).
\]
If $K$ is nontrivial, then $b\notin K$, since otherwise cancellation in
$K$ would force every element of $K$ to be its identity. In particular,
$S$ cannot itself be a nontrivial group.

Suppose that $|K|=3$. Then $S=K\cup\{b\}$, and $b$ is the greatest
additive element and a multiplicative zero. Distinct elements of $K$
are incomparable in the additive order. Indeed, an inequality $g\leq h$
inside $K$ gives $1\leq g^{-1}h$; multiplying repeatedly by $g^{-1}h$
gives a cycle of inequalities and hence $g=h$. Thus the join of any two
distinct elements of $K$ is $b$. This proves (1).

For (2), recall the elementary description of a finite cyclic semigroup.
There are an index $r\geq1$ and a period $d\geq1$ such that its distinct
powers are $a,a^2,\ldots,a^{r+d-1}$ and $a^{r+d}=a^r$.
The periodic part is a cyclic group of order $d$: choose a multiple $k$
of $d$ with $k\geq r$, and use $a^k$ as its identity. Here
$r+d-1\leq4$. Part (1) excludes $d=3$ when $S\not\cong G_3$, and
$d=4$ would make $S$ a group. If $d=2$ and $r=3$, then $S=\langle a\rangle$
has the subgroup $\{a^3,a^4\}$ and has only one idempotent, namely $a^4$.
Its subgroup join would be a different idempotent, by the first paragraph,
a contradiction. Consequently, either $d=1$ and $r\leq4$, or $d=2$ and
$r\leq2$. In both cases $a^4$ is idempotent. The condition that $a^2$
is not idempotent leaves precisely $d=1$ with $r=3$ or $r=4$.
\end{proof}

\begin{lemma}\label{lem:idempotent-closure}
Suppose that every square in a four-element ai-semiring $S$ is
idempotent. If $E(S)$ is not a subsemiring, then there are elements
$e,f,u,h$ such that
\begin{equation}\label{eq:exceptional-shape}
S=\{e,f,u,h\},\qquad E(S)=\{e,f,h\},\qquad
 e+f=u<h,\qquad u^2=h.
\end{equation}
After interchanging $e$ and $f$ if necessary, one has $ef=h$, while
$fe\in\{e,f,u,h\}$.
\end{lemma}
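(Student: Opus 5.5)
The plan is to work throughout with the hypothesis that $x^2\in E(S)$ for every $x\in S$. It follows that $E(S)=\{x^2:x\in S\}$ is nonempty, and that every $u\notin E(S)$ satisfies $u^4=u^2$. The one computation used repeatedly is, for idempotents $e,f$,
\[
(e+f)^2=e+f+ef+fe .
\]
It gives two things. First, $e+f\le(e+f)^2$. Second, $e+f\in E(S)$ exactly when $ef+fe\le e+f$. Since $|S|=4$ and $E(S)$ is not a subsemiring, $E(S)\ne S$, so there is at least one nonidempotent. If $|E(S)|=1$, then $E(S)$ is trivially a subsemiring. So $|E(S)|\in\{2,3\}$.

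\emph{Step 1: closure fails under addition.} Suppose there are idempotents $e,f$ with $u:=e+f\notin E(S)$, and put $h=u^2$. Then $h\in E(S)$ and $h=u+ef+fe\ge u$, and $h\ne u$ because $u$ is not idempotent, so $u<h$. Also $e,f\le u<h$, so $e,f,u,h$ are pairwise distinct, which gives $S=\{e,f,u,h\}$ and $E(S)=\{e,f,h\}$. For the product statement: if both $ef$ and $fe$ lay in $\{e,f,u\}$, then both would be $\le u$, forcing $u^2=u$, a contradiction. Hence one of them is $h$, and after interchanging $e$ and $f$ we get $ef=h$. The value of $fe$ is then just some element of $S=\{e,f,u,h\}$. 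This yields~\eqref{eq:exceptional-shape}.

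\emph{Step 2: closure fails only under multiplication.} The remaining task, and the main obstacle, is to exclude the case where $E(S)$ is closed under $+$ but $u:=ef\notin E(S)$ for some $e,f\in E(S)$. In that case $g:=e+f\in E(S)$ and $ef+fe\le g$, so $u\le g$. I would split by $|E(S)|$.

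If $|E(S)|=3$, then $u$ is the unique nonidempotent. I would compute $u^2=efef$ and the products $eu=u$ and $uf=u$. I would also use the fact that $E(S)$ is a three-element subsemilattice containing $g$, so $g$ is its maximum or $e,f$ are comparable. The aim is to show that $u^2$ equals $u$, or else that some sum of idempotents is $u$, which contradicts closure under addition. A comparability argument should do it: if $e\le f$, then $e=e^2\le ef\le f^2=f$ and likewise $e\le fe$, which pins $u$ between idempotents.

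If $|E(S)|=2$, say $E(S)=\{e,f\}$, there are two nonidempotents $u,v$ whose squares lie in $\{e,f\}$. Here closure under addition forces $e,f$ to be comparable, say $e<f$. Then $e\le ef\le f$, and I would check that $ef\in\{e,f\}$ using monotonicity together with $(ef)^2\in E(S)$ and $u^4=u^2$.

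I expect this multiplicative case to carry most of the case-checking. The first thing I would try is reducing everything to the monotonicity inequalities $e\le ef$ and $ef\le f$ for comparable idempotents, combined with the identity for $(e+f)^2$ displayed above.
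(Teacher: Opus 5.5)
Your Step~1 is correct and matches the paper's treatment of the case where $E(S)$ is not closed under addition. The gap is Step~2, the case where $E(S)$ is closed under addition but $u=ef\notin E(S)$. That case is half of the lemma, and you only sketch it. The comparability idea does not settle it. If $e\le f$, monotonicity only places $ef$ and $fe$ in the interval $[e,f]$, which is not contradictory in itself. When $e,f$ are incomparable, so that $e+f$ is the top of $E(S)$, your sketch gives no argument. In the case $|E(S)|=2$, monotonicity is also not what excludes $ef\notin E(S)$.

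The missing ideas are algebraic identities rather than order inequalities. Put $d=u^2\in E(S)$, and note that $eu=u=uf$.

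First, $d\ne e$ and $d\ne f$. If $d=e$, then $u=df=u^2f=u^2=e$. If $d=f$, then $u=ed=eu^2=u^2=f$. So $e,f,u,d$ are four distinct elements and exhaust $S$. This also makes your case $|E(S)|=2$ vacuous.

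Second, $d=e(fe)f$, and each choice $fe\in\{e,f,u\}$ makes the right side equal to $u$. Hence $fe=d$, and associativity makes $d$ a multiplicative zero.

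Third, $e$ and $f$ are incomparable. If $e\le f$, then $u=eu\le fu=d$ and $d=ue\le uf=u$, which is impossible; the reverse inequality fails in the same way. Hence $e+f\in\{u,d\}$.

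Fourth, $e+f=d$ is impossible. From $(e+f)f=d$ we get $u+f=d$, whereas $e(u+f)=u\ne d=ed$.

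Therefore $e+f=u\notin E(S)$. Failure of multiplicative closure always forces failure of additive closure, so the situation of your Step~2 cannot occur, and Step~1 then completes the proof. Without an argument of this kind the proof of the lemma is incomplete.
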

\begin{proof}
We first show that failure of multiplicative closure of $E(S)$ forces
failure of its additive closure. Suppose that $a,b\in E(S)$ but
$c=ab\notin E(S)$, and put $d=c^2\in E(S)$. We have $ac=c=cb$.
If $d=a$, then $ab=db=c^2b=c^2=a$, a contradiction. Similarly,
$d=b$ would give $ab=ad=ac^2=c^2=b$. Thus $a,b,c,d$ are four distinct
elements and constitute $S$. The equality
\[
d=c^2=a(ba)b
\]
excludes $ba\in\{a,b,c\}$, as each of these choices makes its right
side $c$. Hence $ba=d$. Associativity now gives
\[
ad=da=bd=db=d,
\]
and $d$ is a multiplicative zero.

The elements $a,b$ are incomparable. For example, $a\leq b$ would give
$c=ac\leq bc=d$ and $d=ca\leq cb=c$, a contradiction; the reverse
inequality is excluded in the same way. Therefore $a+b\in\{c,d\}$.
If $a+b=d$, then $(a+b)b=d$ gives $c+b=d$, whereas
\[
a(c+b)=ac+ab=c\ne d=ad.
\]
It follows that $a+b=c\notin E(S)$.

Now suppose that additive closure fails, and choose $e,f\in E(S)$ with
$u=e+f\notin E(S)$. The element $h=u^2$ is idempotent, and
\[
h=(e+f)^2=u+ef+fe\geq u.
\]
Since $h\ne u$, the four elements $e,f,u,h$ are distinct. This gives
\eqref{eq:exceptional-shape}; in particular, $h$ is the greatest additive
element. All elements other than $h$ are at most $u$, so the last
displayed equality forces $ef=h$ or $fe=h$. Interchanging the names
$e,f$ gives the stated form.
\end{proof}

\begin{lemma}\label{lem:closed-idempotents-absorption}
Suppose that every square in a four-element ai-semiring $S$ is
idempotent and that $E(S)$ is a subsemiring. Then
\eqref{eq:structural-absorption} holds.
\end{lemma}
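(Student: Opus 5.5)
The plan is to exploit the fact that each $s_i=x_i^2$ lies in $E(S)$. Since $E(S)$ is multiplicatively closed, $f=s_2s_3\cdots s_n\in E(S)$ and $p=s_1f\in E(S)$. The terms $t_is_iz$ only enlarge the right-hand side, so I will aim for the stronger inequality
\[
s_1fz\leq s_1f+s_1z ,
\]
from which \eqref{eq:structural-absorption} follows at once.

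The basic tool is an expansion that holds whenever a sum is idempotent. Let $e,f,w$ be elements such that $f+w\in E(S)$. Then
\[
e(f+w)=e(f+w)^2=ef^2+efw+ewf+ew^2 .
\]
In particular $efw\leq e(f+w)=ef+ew$, provided also $f^2=f$ and $w^2=w$.

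\emph{Case $z\in E(S)$.} Since $E(S)$ is additively closed, $f+z\in E(S)$. Applying the identity with $e=s_1$ and $w=z$ gives $s_1fz\leq s_1f+s_1z=p+s_1z$, which settles this case.

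\emph{Case $z\notin E(S)$.} This is the main obstacle, because $f+z$ need not be idempotent. By hypothesis $z^2\in E(S)$, so $z^2=z^4$, and $|E(S)|\leq3$. I would first try the expansion with $w=z^2$; it gives $s_1fz^2\leq p+s_1z^2$. Then I would multiply suitable idempotent sums by $z$ on the right, using $(f+z)^2\in E(S)$ and $(f+z)^2=(f+z)^4$, to transport the inequality from $z^2$ back to $z$. Concretely, expanding $s_1(f+z)^2$ yields $s_1fz\leq s_1f+s_1fz+s_1zf+s_1z^2$. The task is then to absorb the extra terms $s_1zf$ and $s_1z^2$ into $p+s_1z$.

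If the direct manipulation does not close, I would use the size restriction. There is exactly one non-idempotent-free configuration to understand: $S=E(S)\cup\{\text{non-idempotents}\}$ with $E(S)$ a subsemiring of order at most three. Moreover $z$ and $z^2$ are distinct, with $z^2\in E(S)$. I would split according to $|E(S)|\in\{2,3\}$ and the position of $z$ in the additive order relative to $E(S)$, checking whether $z\leq z^2$ or whether $z+z^2$ lies in $E(S)$. In each subcase I would show that $s_1z\geq s_1fz$ or $p\geq s_1fz$ directly from distributivity, for instance via $s_1(f+z)\geq s_1fz$ once $f+z$ is forced to be idempotent.
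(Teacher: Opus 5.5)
Your reduction to the stronger inequality $s_1fz\leq s_1f+s_1z$, i.e.\ $pz\leq p+s_1z$, cannot succeed. That inequality is false under the hypotheses of the lemma, so the summands $t_is_iz$ may not be discarded.

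Here is a counterexample. Let $S=\{1,e,z,h\}$, with addition the maximum of the chain $1<e<z<h$. Take the commutative multiplication in which $1$ is an identity, $h$ is a zero, $e^2=e$, and $ez=z^2=h$.
\begin{itemize}
\item On $\{e,z,h\}$, a product of two or more factors is $e$ if all factors are $e$ and $h$ otherwise, so the multiplication is associative.
\item On the chain $1<e<z<h$, the translations by $e$ and by $z$ take the values $e,e,h,h$ and $z,h,h,h$. They are monotone, so both distributive laws hold.
\item The squares are $1,e,h,h$, all idempotent, and $E(S)=\{1,e,h\}$ is closed under both operations.
\end{itemize}
Now put $x_1=1$ and $x_2=\cdots=x_n=e$. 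Then $s_1=1$, $f=e$, $p=e$ and $pz=h$, while $p+s_1z=e+z=z<h$. So neither $s_1z\geq s_1fz$ nor $p\geq s_1fz$ holds. The expansion of $s_1(f+z)^2$ does not help either. It is circular, since its right side already contains $s_1fz$, and in any case the extra terms $s_1zf=s_1z^2=h$ are not absorbed by $p+s_1z$.

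In this example, \eqref{eq:structural-absorption} holds only because $t_2s_2z=t_2h=h$. This is exactly the mechanism the paper's proof needs in its hardest subcase. The paper treats $z\in E(S)$ as you do and eliminates $|E(S)|\leq 2$. For $|E(S)|=3$ it shows that the only surviving configuration has the following features:
\begin{itemize}
\item $fz=z^2=h$;
\item $sh=h$ for every $s\in S$;
\item some factor $s_i$ with $i\geq 2$ lies in $\{f,h\}$.
\end{itemize}
Then $t_is_iz=t_ih=h=pz$ for every choice of $t_i$. A correct proof along your lines must locate this configuration and invoke the terms $t_is_iz$ there. The case $z\notin E(S)$ cannot be closed using $p+s_1z$ alone.
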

\begin{proof}
All $s_i$ lie in $E=E(S)$. Put $a=s_1$ and
$b=s_2\cdots s_n\in E$, so $p=ab$, and write $q=pz$.
For any $v,w\in E$,
\begin{equation}\label{eq:band-order}
vw\leq(v+w)^2=v+w.
\end{equation}
Consequently, if $z\in E$, then $bz\leq b+z$ and
$q\leq p+az$. We may therefore assume that $z\notin E$ and
\begin{equation}\label{eq:anchor-failure}
q\not\leq p+az.
\end{equation}
The set $E$ is nonempty. If $|E|=1$, then $p=a$ and $q=az$, contrary
to~\eqref{eq:anchor-failure}. The case $|E|=4$ was already covered.

Suppose that $|E|=2$. Since $p\ne a$, write $E=\{a,p\}$.
Its additive order is a chain. If $p\leq a$, then $q=pz\leq az$,
so $a<p$. Also $b=p$ and $ap=p$. The element $q$ cannot lie in $E$,
whose greatest element is $p$. Nor can $q=z$, since then
$az=a(pz)=(ap)z=q$. Thus $S=\{a,p,z,q\}$, with
\[
aq=pq=q.
\]
Since $q^2\in E$, this implies $q^3=q$. Set $v=q+q^2$.
Then $v^2=v$ and $vq=v$, so $v\in E$. But every element of $E$
multiplies $q$ on the left to give $q$. Hence $v=vq=q$, contrary
to $q\notin E$.

It remains to consider $|E|=3$, so $z$ is the unique nonidempotent.
If $b+z$ were idempotent, expansion of $(b+z)^2$ would give
$bz\leq b+z$, contradicting~\eqref{eq:anchor-failure} after left
multiplication by $a$. Thus $b+z=z$ and $b\leq z$. Moreover,
$bz\not\leq z$, for otherwise $q\leq az$. Put
\[
c=bz\in E,\qquad h=z^2\in E.
\]
Monotonicity gives
\begin{equation}\label{eq:bch}
b=b^2\leq bz=c\leq z^2=h,\qquad c\not\leq z,
\end{equation}
so $b<c\leq h$.

Assume first that $c\ne h$. Then $E=\{b,c,h\}$ with $b<c<h$.
Since $h\not\leq z$, the join $z+h$ is different from $z$ and hence
belongs to $E$. It follows that $z+h=h$. Thus $h$ is the greatest
element of $S$ and $hz=zh=h$, while
\[
cz=bz^2=bh\geq bz=c.
\]
There are now three possibilities for $a$. If $a=b$, then
$q=bc=c=az$. If $a=c$, then $q=c^2=c\leq cz=az$.
If $a=h$, then $az=h$ is greatest. Each contradicts
\eqref{eq:anchor-failure}.

Therefore $c=h$, that is, $bz=h$. Since $E$ is multiplicatively closed,
\[
z^3=bz^2=bh\in E.
\]
Every square is idempotent, so $z^4=z^2=h$ and $z^6=h$.
The idempotence of $z^3$ now gives $z^3=(z^3)^2=z^6=h$. In particular,
\begin{equation}\label{eq:right-zero-part}
bh=hz=zh=h.
\end{equation}
The choices $a=b$ and $a=h$ both give $q=az=h$ and are excluded.
Hence $E=\{a,b,h\}$. Since $p=ab\in E$, the choice $p=a$ would
again give $q=az$. Thus $p\in\{b,h\}$, whence $q=pz=h$.
But $q=abz=ah$, so $ah=h$. Together with
\eqref{eq:right-zero-part}, this shows that
\begin{equation}\label{eq:right-zero}
sh=h\qquad(s\in S).
\end{equation}

Finally, $b=s_2\cdots s_n\ne a$ implies that some $s_i$, $i\geq2$,
belongs to $\{b,h\}$; otherwise that product would be $a$.
For this index, $s_i z=h$ and
\[
t_i s_i z=t_i h=h=q.
\]
Thus a summand on the right of~\eqref{eq:structural-absorption} is
exactly its left side, completing the proof.
\end{proof}

\begin{lemma}\label{lem:exceptional-idempotents-absorption}
Suppose that every square in a four-element ai-semiring $S$ is
idempotent but $E(S)$ is not a subsemiring. Then
\eqref{eq:structural-absorption} holds.
\end{lemma}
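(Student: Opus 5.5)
By Lemma~\ref{lem:idempotent-closure}, $S=\{e,f,u,h\}$ with $E(S)=\{e,f,h\}$, $e+f=u<h$, $u^2=h$, and (after renaming) $ef=h$. Since $h$ is the additive maximum, $h+s=h$ and $sh\le h$, $hs\le h$ for all $s$. All squares $s_i$ lie in $E(S)$, so every $s_i\in\{e,f,h\}$. The plan is to pin down the multiplication table enough that in each configuration either some summand on the right of \eqref{eq:structural-absorption} equals $h$, or $pz$ is already below one of $p$, $s_1z$.

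First I would show that $h$ is a multiplicative zero. Idempotence gives $h=h^2$. From $e\le u$ we get $eh\le uh$, and $uh=u^3$. Since $u^4=h$ and squares are idempotent, the argument of Lemma~\ref{lem:small-powers} gives $u^3\in\{h\}\cup\{\dots\}$. Then $eh=e(ef)\cdots=(ee)f=ef=h$ and $hf=h$. Likewise $he=u^2e=(e+f)^2e$, which expands to a sum containing $efe$. Using $fe\in\{e,f,u,h\}$, I expect each case to give $he=h$ and $fh=h$. Indeed $hu\ge he$, so once $he=h=eh=fh=hf$ we also get $uh=hu=h$, and $h$ is absorbing. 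The four values of $fe$ should be checked separately, and some of them may be contradictory. For instance, $fe=e$ gives $ef\cdot e=e\cdot fe=e$, hence $he=e$; but $he\ge ue\ge e$, so I must verify consistency by distributivity.

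With $h$ a zero and the maximum, \eqref{eq:structural-absorption} becomes trivial whenever $p=h$ or $s_iz=h$ for some $i$ (taking $t_i$ arbitrary). So assume $p\ne h$ and all $s_iz\ne h$. Then $p\in\{e,f\}$ and $z\ne h$. Since $ef=h$, no $e$ may precede an $f$ in $s_1\cdots s_n$. I would then handle $z\in\{e,f,u\}$ and the pattern of the $s_i$ (all $e$, all $f$, or some $f$'s followed by $e$'s when $fe\in\{e,f\}$) by direct computation. In the all-$e$ or all-$f$ patterns, $p=s_1$ and $pz=s_1z$. In mixed patterns with $s_1=f$ and some later $s_i=e$, I would use $s_iz$ or $s_1z$. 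For example, if $z\in\{f,u\}$ then $ez\ge ef=h$, which contradicts the assumption, so $z=e$. Then $pz=pe$ and $s_1z=fe$, and I need $pe\le p+fe$, which follows from the value of $fe$ determined in the first step.

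The main obstacle will be the first step: fixing the products involving $fe$ and $u$ consistently with the axioms, since the four possibilities for $fe$ each need their own small distributivity argument. If $h$ turns out not to be absorbing in some case, I would instead check \eqref{eq:structural-absorption} directly from that case's (essentially determined) table.
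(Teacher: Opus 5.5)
\textbf{There is a genuine gap.} Your first step, that $h$ is a multiplicative zero, is false. The configurations where it fails are consistent and actually occur.

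Since $h=ef$, associativity gives $he=e(fe)$ and $fh=(fe)f$ directly.
\begin{itemize}
\item If $fe=e$, then $he=e$. The clash you suspected with $he\ge ue\ge e$ does not occur, because $ue=e+fe=e$. The configuration is realized: take $xe=e$ for all $x$, let $f$ be a left identity, and put $xy=h$ for $x\in\{e,u,h\}$, $y\in\{f,u,h\}$. With $e+f=u<h$ this is an ai-semiring satisfying every hypothesis of the lemma.
\item If $fe=f$, then $fh=f$, and $f$ and $h$ are both left zeros. This is also realized, with $ee=e$, $ue=u$ and all other products not involving a left factor $f$ or $h$ equal to $h$.
\end{itemize}
Only $fe\in\{h,u\}$ makes $h$ a zero. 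Incidentally, $uh=hu=h$ needs no appeal to Lemma~\ref{lem:small-powers}: monotonicity gives $uh\ge u^2=h$.

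\textbf{Consequences for your second paragraph.} Several reductions there are wrong as stated.
\begin{itemize}
\item The claim that $s_iz=h$ makes \eqref{eq:structural-absorption} trivial needs $th=h$ for every $t$. This fails for $fe=f$: take $t_i=f$, so $t_ih=f$.
\item The claim that no $e$ precedes an $f$ fails for $fe=e$, since $efe=he=e$.
\item For $fe=u$, $p$ can equal $u$ (for example $s_1=f$, $s_2=e$), so $p\in\{e,f\}$ is also wrong.
\end{itemize}
Your fallback, checking each table directly, is exactly the content of the lemma, and it has not been carried out.

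\textbf{What is needed.} Split into four cases on $fe$, using $eu=uf=eh=hf=h$, $ue=e+fe$, $fu=fe+f$ and the two formulas above. Throughout, $p=h$ is trivial because $h$ is the additive top.
\begin{itemize}
\item For $fe=h$: $h$ is a zero and $p\in\{s_1,h\}$.
\item For $fe=u$: $h$ is a zero and $p\in\{s_1,u,h\}$. If $p=u$, some $s_i=e$. When $z=e$ you get $pz=ue=u=p$; otherwise $pz=h=ez=s_iz$, and $t_is_iz=t_ih=h$.
\item For $fe=e$: you only have $sh=h$ for all $s$, but that suffices for $t_ih=h$. The set $\{e,f,h\}$ is closed under multiplication. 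If $p=f$, all $s_i=f$, so $p=s_1$. If $p=e$, some $s_i=e$, since $\{f,h\}$ is a subsemigroup. Then $pz=p$ when $z=e$, and $pz=ez=h=t_is_iz$ otherwise.
\item For $fe=f$: no right absorption is available, but none is needed. Since $f$ and $h$ are left zeros and $ef=eh=h$, a product starting with $e$ stays $e$ or becomes $h$, so $p\in\{s_1,h\}$.
\end{itemize}
This four-case analysis is what the paper does.
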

\begin{proof}
Use Lemma~\ref{lem:idempotent-closure} and its notation. Thus $h$ is
the greatest additive element, $e+f=u$, $u^2=h$, $ef=h$, and
$s_i\in\{e,f,h\}$. Since $u<h$, monotonicity gives $uh=hu=h$.
Distributivity and associativity give
\begin{equation}\label{eq:four-forms}
eu=uf=eh=hf=h,\qquad
ue=e+fe,\quad fu=fe+f,\quad he=e(fe),\quad fh=(fe)f.
\end{equation}
These formulas reduce the proof to the four possible values of $fe$.

If $fe=h$, then $h$ is a multiplicative zero. A product of elements
of $\{e,f,h\}$ is either $h$, or consists entirely of $e$'s or entirely
of $f$'s. Hence either $p=h$ or $p=s_1$, and the desired inequality
follows from $p$ or $s_1z$.

If $fe=u$, then $h$ is again a multiplicative zero, and
$ue=fu=u$, $uf=eu=h$. A product of elements of $\{e,f,h\}$ with
value $e$ or $f$ consists entirely of that element, so again
$p=s_1$. The value $p=h$ is immediate. If $p=u$, at least one factor
$s_i$ is $e$. When $z=e$, one has $pz=u=p$.
For $z\in\{f,u,h\}$, one has $pz=h$ and $s_i z=ez=h$.
Thus $t_i s_i z=h=pz$.

If $fe=e$, then $he=e$, $fh=h$, $ue=e$, and $fu=u$.
In particular $sh=h$ for every $s\in S$, and $E(S)$ is
multiplicatively closed. If $p=h$, there is nothing to prove.
If $p=f$, all factors $s_i$ are $f$, so $p=s_1$.
If $p=e$, some factor $s_i$ is $e$, since $\{f,h\}$ is a subsemigroup.
For $z=e$, one has $pz=p$; otherwise $pz=ez=h$ and
$t_i s_i z=t_i h=h$.

Finally, if $fe=f$, then $fh=f$, $he=h$, and both $f$ and $h$
are left zeros for multiplication. On $\{e,f,h\}$, a product starting
with $f$ stays $f$, one starting with $h$ stays $h$, and one starting
with $e$ either stays $e$ or changes to $h$. Thus $p=s_1$ or $p=h$,
which proves the inequality in the last case.
\end{proof}

\begin{lemma}\label{lem:nonidempotent-square-absorption}
Let $S$ be a four-element ai-semiring not isomorphic to $G_3$.
If some square in $S$ is not idempotent, then
\eqref{eq:structural-absorption} holds.
\end{lemma}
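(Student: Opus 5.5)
We use Lemma~\ref{lem:small-powers}(2). Fix $a\in S$ with $a^2\notin E(S)$. Then either $S=\{a,a^2,a^3,a^4\}$ with $a^5=a^4$, or $T=\{a,a^2,a^3\}$ is a subsemigroup with $a^4=a^3$ and $S=T\cup\{b\}$ for a single extra element $b$. In both cases the strategy is the same. We determine the set of squares $\{x^2:x\in S\}$, in which all the $s_i$ lie. We then show that the product $p=s_1\cdots s_n$ with $n\geq2$ factors is either equal to $s_1$ or equal to an element $0$ that is a multiplicative zero of $S$. In the first case \eqref{eq:structural-absorption} follows from the summand $s_1z$, and in the second from $pz=p$. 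Any leftover configurations will be handled by locating an index $i$ with $s_iz=pz$, using the summand $t_is_iz$ as at the end of Lemma~\ref{lem:closed-idempotents-absorption}.

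The first case is immediate. $S$ is commutative, and its squares are $a^2$ and $a^4$. Any product of at least two squares is a power $a^k$ with $k\geq4$, hence equals $a^4$, and $a^4$ is a multiplicative zero of $S=\langle a\rangle$. So $p=pz$.

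In the second case, put $0=a^3$, which is the zero of $T$. The squares are $a^2$, $a^3$ and $b^2$. Here $b^2\in E(S)$, because otherwise $\langle b\rangle$ would be a second three-element nilpotent cyclic semigroup, impossible inside four elements together with $T$. Since $a$ and $a^2$ are not idempotent, $b^2\in\{b,a^3\}$. The key step is to pin down the products $ab$, $ba$, $a^2b$, $ba^2$ and $a^3b$, $ba^3$, together with the additive order. I will first show that $a^3$ is comparable with $a$ and $a^2$. For instance, if $a+a^2$ is idempotent, expanding its square gives $a^2+a^3+a^4\leq a+a^2$, so $a^3\leq a+a^2$. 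Then I will use the fact, as in Lemma~\ref{lem:small-powers}, that idempotent sums of a subgroup or subsemigroup absorb it. The aim is the dichotomy that $a^3$ is a multiplicative zero of $S$ or that $b$ is. I will derive it from associativity, for example $a^3b=a(a^2b)$ and $(ab)^2\in E(S)$, and from distributivity applied to $(a+b)^2$ and $(a+a^2)b$.

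Once the multiplication is known, the squares that can occur are $a^2$, $0$ and $b$ (when $b^2=b$). A product containing $a^2$ at least twice, or containing $0$, equals $0$. A product of copies of $b$ alone equals $b=s_1$. For mixed products such as $a^2b$ or $ba^2$, I will check whether they collapse to $0$; if they do not, I will find an $s_i$ with $s_iz=pz$. The main obstacle is this structural analysis of how $b$ interacts with $T$ in the three-element case. If $a^3$ fails to be absorbing for $b$, I expect to have to enumerate the few possible tables by hand. There I would use the incomparability and cancellation arguments of Lemma~\ref{lem:idempotent-closure} to reduce each configuration either to a contradiction or to one of the two absorbing outcomes.
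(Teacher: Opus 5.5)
You handle the case $S=\langle a\rangle$ correctly, in the same way as the paper. The three-element case, however, rests on two claims that are false.

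\textbf{First gap: $b^2\in E(S)$ does not follow.} The subsemigroup $\langle b\rangle=\{b,a^2,a^3\}$ would share two elements with $T$, so there is no counting obstruction, and this configuration does occur. Let $S=\{a,b,a^2,a^3\}$ be commutative with $a\cdot a=b\cdot b=a^2$ and every other product equal to $a^3$. Order it by $a,b<a^2<a^3$ with $a+b=a^2$. One checks that this is an ai-semiring, and $b^2=a^2\notin E(S)$. Only $b^2=a$ is excluded, because then $b^4=a^2$, contrary to Lemma~\ref{lem:small-powers}(2).

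So you also need to show that $a^3$ absorbs $b$ when $b^2\in\{a^2,a^3\}$. The paper obtains this from $a^3b=a^2(ab)\in\{a^3,b\}$. If $a^3b=b$, then $a^3b^2=b^2$ forces $b^2=a^3$. Then $\{a^3,b\}$ is a group of order two whose identity is the only idempotent of $S$, contradicting the subgroup-join argument of Lemma~\ref{lem:small-powers}. Symmetrically $ba^3=a^3$, and since all squares lie in $\{a^2,a^3\}$, $p=a^3=pz$.

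\textbf{Second gap: the dichotomy is false when $b^2=b$.} You aim to show that $a^3$ or $b$ is a two-sided multiplicative zero, but this fails, so your planned enumeration cannot end the way you expect. Take the chain $a<a^2<a^3<b$ with $bx=b$ for all $x$, $a\cdot a=a^2$, and every other product with left factor in $T$ equal to $a^3$. All one-sided multiplications are monotone on the chain, so this is an ai-semiring. In it $ba^3=b$ and $a^3b=a^3$, so neither element is a zero. Its opposite semiring also breaks your scheme ``$p=s_1$ or $p$ is a zero''. There $b$ is a right zero, and $x_1=a$, $x_2=b$ give $p=a^2b=b$, which is neither $s_1$ nor a zero.

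What is missing is the paper's classification for $b^2=b$:
\begin{itemize}
\item $ab\ne a^2$, since otherwise $ab=(ab)b=a^2b=a(ab)=a^3$; likewise $ba\ne a^2$.
\item The relation $(ab)a=a(ba)$ then leaves exactly $(ab,ba)\in\{(a,a),(a^3,a^3),(a^3,b),(b,a^3),(b,b)\}$.
\item The two mixed pairs are realized by the two examples above and are settled directly, not through a zero element. For $(a^3,b)$, both $b$ and $a^3$ are left zeros, so $p=s_1$ or $p=a^3=pz$. For $(b,a^3)$, one gets $pz=s_1z$ for every $z$.
\end{itemize}

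Two smaller points. Your fallback through the summand $t_is_iz$ is incomplete as stated: since $t_i$ is arbitrary, it works only when $t\,pz=pz$ for every $t\in S$. And the comparability of $a^3$ with $a$ and $a^2$ that you plan to prove is never needed.
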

\begin{proof}
Choose $a\in S$ with $a^2\notin E(S)$ and apply
Lemma~\ref{lem:small-powers}. If
$S=\{a,a^2,a^3,a^4\}$ with $a^5=a^4$, put $h=a^4$.
Then $h$ is a multiplicative zero, every square belongs to
$\{a^2,h\}$, and the product of any two squares is $h$.
Since $n\geq2$, we have $p=h=pz$.

Otherwise put $b=a^2$, $h=a^3$, and
$N=\{a,b,h\}$. These three elements are distinct, and multiplication
on $N$ satisfies
\begin{equation}\label{eq:nil-three}
a^2=b,\qquad ab=ba=b^2=ah=ha=bh=hb=h^2=h.
\end{equation}
Write $S=N\cup\{v\}$. By Lemma~\ref{lem:small-powers}, $v^4$ is
idempotent. Therefore $v^2\ne a$, since $v^2=a$ would imply
$v^4=b\notin E(S)$. It follows that
\[
v^2\in\{b,h,v\}.
\]
We shall distinguish the case that $v$ is idempotent from the others.

Suppose first that $v^2\in\{b,h\}$. We claim that $h$ is a
multiplicative zero of $S$. Indeed,
\[
hv=b(av).
\]
If $av\in N$, this is $h$ by~\eqref{eq:nil-three}; if $av=v$,
then $bv=a(av)=v$ and $hv=a(bv)=v$. Thus $hv\in\{h,v\}$.
If $hv=v$, associativity gives $hv^2=v^2$, whereas $v^2\in N$
gives $hv^2=h$. Hence $v^2=h$, $v^3=v$, and $vh=v$.
The set $\{h,v\}$ is then a group of order two. But $h$ is the only
idempotent of $S$, contradicting the subgroup-join argument in
Lemma~\ref{lem:small-powers}. Consequently $hv=h$; the same argument
with the multiplication reversed gives $vh=h$. All squares now belong
to $\{b,h\}$, so $p=h=pz$.

Suppose next that $v^2=v$. Neither $av$ nor $va$ can be $b$.
For example, $av=b$ would imply $bv=a(av)=h$, but also
$av=(av)v=bv=h$. Thus
\[
av,va\in\{a,h,v\}.
\]
Using $(av)a=a(va)$ and~\eqref{eq:nil-three}, one obtains exactly
the following possibilities:
\begin{equation}\label{eq:extension-actions}
(av,va)=(a,a),\ (h,h),\ (h,v),\ (v,h),\ (v,v).
\end{equation}
To see that no pair has been omitted, if $av=a$, the left side of
$(av)a=a(va)$ is $b$, forcing $va=a$; the symmetric argument treats
$va=a$. When neither value is $a$, both belong to $\{h,v\}$.
The actions of $v$ on $b$ and $h$ are obtained by associativity.
In all five cases $s_i\in\{b,h,v\}$.

For $(av,va)=(a,a)$, the element $v$ is a multiplicative identity
and $h$ is a multiplicative zero. If $p=v$, then $s_1=v$ and
$pz=s_1z$; if $p=h$, then $pz=p$. The remaining possibility is $p=b$.
There is then a factor $s_i=b$ and all other factors are $v$.
If $z=v$, then $pz=p$; if $z\in N$, then
$pz=bz=h=t_i s_i z$.

For $(av,va)=(h,h)$, one has $vN=Nv=\{h\}$ and $h$ is a
multiplicative zero. Thus $p=v$ when all $s_i=v$, and $p=h$
otherwise. These alternatives are absorbed by $s_1z$ and $p$,
respectively.

For $(av,va)=(h,v)$, the element $v$ is a left zero, while
$Nv=\{h\}$ and $h$ is a left zero. If $s_1=v$, then $p=v=s_1$.
If $s_1\in\{b,h\}$, the first two factors have product $h$ and
$p=h=pz$.

For $(av,va)=(v,h)$, one has $Nv=\{v\}$ and $vN=\{h\}$.
The product $p$ belongs to $\{h,v\}$. If $z=v$, then
$pz=v=s_1z$. If $z\in N$, then every element of $\{b,h,v\}$
multiplies $z$ to give $h$, so again $pz=h=s_1z$.

For $(av,va)=(v,v)$, the element $v$ is a multiplicative zero.
If some $s_i=v$, then $p=v=pz$. Otherwise $p=h$.
For $z\in N$ one has $pz=p$, and for $z=v$ one has $pz=s_1z=v$.
This completes all cases of~\eqref{eq:extension-actions}.
\end{proof}

\begin{proof}[Proof of Proposition~\ref{prop:strong-absorption}]
If some square is not idempotent, apply
Lemma~\ref{lem:nonidempotent-square-absorption}. Otherwise every square
is idempotent. According as $E(S)$ is or is not a subsemiring,
apply Lemma~\ref{lem:closed-idempotents-absorption} or
Lemma~\ref{lem:exceptional-idempotents-absorption}.
\end{proof}

\begin{proposition}\label{prop:valid}
Every four-element ai-semiring satisfies $\sigma_n$ for every
$n\in\Ncal$.
\end{proposition}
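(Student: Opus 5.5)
We split on whether $S$ is isomorphic to $G_3$. If $S\cong G_3$, then $\sigma_n$ holds by Lemma~\ref{lem:validT}: $G_3$ is commutative, so $\sigma_n$ is equivalent in it to $q_n\preceq\bu_n$, which was verified there for all $n\in\Ncal$ using $n\equiv1\pmod3$.

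Now let $S$ be a four-element ai-semiring not isomorphic to $G_3$. We deduce $\sigma_n$ from Proposition~\ref{prop:strong-absorption}. Fix a valuation of $x_1,\ldots,x_n,y_1,\ldots,y_n,z$ in $S$ and put $s_i=x_i^2$ and $p=s_1\cdots s_n$. These agree with the values of the words in \eqref{eq:words}: $p$ is the value of $p_n$, the value of $a_n$ is $s_1z$, and the value of $q_n$ is $pz$. The summand $y_iy_{i+1}y_{i+2}x_i^2z$ of $\bfam_n$ has value $t_is_iz$ with $t_i=y_iy_{i+1}y_{i+2}$, the $y$-indices read modulo $n$. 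Hence the value of $\bfam_n$ is exactly the right side of \eqref{eq:structural-absorption}. Since $n\ge7\ge2$, Proposition~\ref{prop:strong-absorption} gives that the value of $q_n$ is at most the value of $\bfam_n$. In the additive order this means the value of $q_n+\bfam_n$ equals that of $\bfam_n$. As the valuation was arbitrary, $S\models\sigma_n$.

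There is no genuine obstacle here; all the work was done in the proposition. The one point to check is that the word order in $\bfam_n$ places the coefficient $y_iy_{i+1}y_{i+2}$ on the left of $x_i^2z$. This is what lets it serve as the arbitrary left coefficient $t_i$ in \eqref{eq:structural-absorption}, which was stated without assuming commutativity.
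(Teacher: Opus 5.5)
Your proof is correct and essentially the paper's own: for $S\not\cong G_3$ you specialize Proposition~\ref{prop:strong-absorption} with $t_i=y_iy_{i+1}y_{i+2}$, exactly as the paper does. For $G_3$ you cite Lemma~\ref{lem:validT}, which is legitimate because $G_3$ is commutative; the paper instead repeats the same cyclic-product computation (using $n\equiv1\pmod 3$) inline, so the argument is the same.
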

\begin{proof}
Let $S$ be a four-element ai-semiring, fix $n\in\Ncal$, and fix an
arbitrary valuation of the variables of $\sigma_n$ in $S$.
If $S\not\cong G_3$, take
\[
t_i=y_i y_{i+1}y_{i+2}\quad(1\leq i\leq n)
\]
in Proposition~\ref{prop:strong-absorption}. Its conclusion is exactly
$q_n\leq\bfam_n$.

It remains to consider $S=G_3$. If $\bfam_n=\infty$, the inequality
is automatic. Otherwise all its summands have one common value
$c\in C_3$, and every variable has its value in $C_3$, since it occurs
in a summand. Multiplying the $n$ local summands in this group gives
\[
c^n=\prod_{i=1}^{n}\bigl(y_i y_{i+1}y_{i+2}x_i^2z\bigr)
   =\left(\prod_{j=1}^{n}y_j^3\right)
      \left(\prod_{i=1}^{n}x_i^2\right)z^n
   =p_n z^n.
\]
Here commutativity is used only inside the cyclic group $C_3$.
Since $p_n=c$, $n\equiv1\pmod3$, and the group has exponent three,
this equality becomes $c=cz$. Thus $z=1$ and
$q_n=p_nz=c=\bfam_n$. The identity therefore holds in every case.
\end{proof}

\begin{proof}[Proof of Theorem~\ref{thm:main}]
By Lemma~\ref{lem:inclusion}, the variety $\A$ contains $H,D,G_2,G_3$.
Proposition~\ref{prop:valid} and preservation of identities under
homomorphic images, subalgebras, and direct products give
$\A\models\sigma_n$ for every $n\in\Ncal$.
Theorem~\ref{thm:criterion} now implies that $\A$ is nonfinitely based.
\end{proof}

The same argument applies throughout an interval of the subvariety
lattice of $\A$.

\begin{corollary}\label{cor:interval}
Every ai-semiring variety $\mathcal W$ satisfying
\[
\T\subseteq\mathcal W\subseteq\A
\]
is nonfinitely based. In fact, no such variety has an equational basis in
a bounded number of variables.
\end{corollary}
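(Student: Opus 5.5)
The corollary follows by combining Theorem~\ref{thm:criterion} with Proposition~\ref{prop:valid}. We check the two hypotheses of the criterion for $\mathcal W$ and then take the stronger conclusion it gives.

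First, $\mathcal W$ contains $H$, $D$, $G_2$ and $G_3$. By definition~\eqref{eq:T}, $\T$ is the variety generated by these four algebras, so each lies in $\T$. Since $\T\subseteq\mathcal W$, each also lies in $\mathcal W$.

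Second, $\mathcal W$ satisfies $\sigma_n$ for every $n\in\Ncal$, and in particular for infinitely many. In the proof of Theorem~\ref{thm:main} we showed, using Proposition~\ref{prop:valid} and the fact that identities are preserved under homomorphic images, subalgebras and direct products, that $\A\models\sigma_n$ for every $n\in\Ncal$. An identity valid in a variety is valid in each of its subvarieties. Because $\mathcal W\subseteq\A$, we get $\mathcal W\models\sigma_n$.

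With both hypotheses in place, Theorem~\ref{thm:criterion} says that $\mathcal W$ has no equational basis whose identities involve a bounded number of variables. A finite basis would have such a bound, so $\mathcal W$ is nonfinitely based. This gives both assertions of the corollary.

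No step is difficult. The one point to note is that the proof of Theorem~\ref{thm:criterion} uses only two facts about $\mathcal V$. The first is that its identities hold in $\T$, which is what allows Lemma~\ref{lem:class} to restrict every intermediate term of a deduction to $\bu_n$ or $\bu_n+q_n$. The second is that $\sigma_n$ follows from the proposed basis. Neither fact depends on $\mathcal V$ being all of $\A$: the first follows from $\T\subseteq\mathcal W$ and the second from $\mathcal W\subseteq\A$. Hence the argument applies without change to every variety in the interval.
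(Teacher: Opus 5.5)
Your proposal is correct and is the paper's own argument: the lower inclusion $\T\subseteq\mathcal W$ supplies $H,D,G_2,G_3$, and the upper inclusion $\mathcal W\subseteq\A$ together with Proposition~\ref{prop:valid} supplies every $\sigma_n$, after which Theorem~\ref{thm:criterion} gives both assertions. Your closing remark about which features of $\mathcal V$ the criterion actually uses is accurate, though it is not needed here, since $\mathcal W$ meets the theorem's hypotheses exactly as stated.
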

\begin{proof}
The lower inclusion supplies the four auxiliary semirings, and the upper
inclusion supplies every $\sigma_n$. Apply Theorem~\ref{thm:criterion}.
\end{proof}

In particular, $\T$ is nonfinitely based. Its cyclic identities also have
the direct verification in Lemma~\ref{lem:validT}. Neither the application
to $\A$ nor the interval conclusion requires an equational basis for any
individual four-element generator.


\begin{thebibliography}{99}
\bibitem{BS}
S. Burris and H. P. Sankappanavar,
\emph{A Course in Universal Algebra}, Springer, New York, 1981.

\bibitem{JRZ}
M. Jackson, M. M. Ren, and X. Z. Zhao,
Nonfinitely based ai-semirings with finitely based semigroup reducts,
\emph{J. Algebra} \textbf{611} (2022), 211--245.
\href{https://doi.org/10.1016/j.jalgebra.2022.07.042}{doi:10.1016/j.jalgebra.2022.07.042}.

\bibitem{LRY}
S. Lyu, M. Ren, and M. Yue,
A new limit variety of additively idempotent semirings,
preprint, arXiv:2604.18588v2, 2026.

\bibitem{RLLC}
M. Ren, J. Liu, L. Zeng, and M. Chen,
The finite basis problem for additively idempotent semirings of order four, I,
\emph{Semigroup Forum} \textbf{110} (2025), 422--457.
\href{https://doi.org/10.1007/s00233-025-10520-7}{doi:10.1007/s00233-025-10520-7}.

\bibitem{RLYC}
M. Ren, Z. Liu, M. Yue, and Y. Chen,
The finite basis problem for additively idempotent semirings of order four, III,
\emph{Semigroup Forum} \textbf{112} (2026), 541--573.
\href{https://doi.org/10.1007/s00233-025-10584-5}{doi:10.1007/s00233-025-10584-5}.

\bibitem{SR}
Y. Shao and M. M. Ren,
On the varieties generated by ai-semirings of order two,
\emph{Semigroup Forum} \textbf{91} (2015), 171--184.
\href{https://doi.org/10.1007/s00233-014-9667-z}{doi:10.1007/s00233-014-9667-z}.

\bibitem{WJW}
A. Wang, W. Ju, and L. Wang,
The variety generated by all semirings of order three is nonfinitely based,
preprint, arXiv:2609.15703v1, 2026.

\bibitem{YR124}
M. Yue and M. Ren,
A nonfinitely based additively idempotent semiring of order four,
preprint, arXiv:2605.15493v1, 2026.

\bibitem{YRG}
M. Yue, M. Ren, and Z. Gao,
Two nonfinitely-based additively idempotent semirings of order four,
\emph{Int. J. Algebra Comput.} (2026),
\href{https://doi.org/10.1142/S0218196726500414}{doi:10.1142/S0218196726500414}.

\bibitem{YRZS}
M. Yue, M. Ren, L. Zeng, and Y. Shao,
The finite basis problem for additively idempotent semirings of order four, II,
\emph{Algebra Universalis} \textbf{86} (2025), article 33.
\href{https://doi.org/10.1007/s00012-025-00908-5}{doi:10.1007/s00012-025-00908-5}.

\bibitem{ZRCSD}
X. Z. Zhao, M. M. Ren, S. Crvenkovi\'{c}, Y. Shao, and P. \DJ api\'{c},
The variety generated by an ai-semiring of order three,
\emph{Ural Math. J.} \textbf{6}(2) (2020), 117--132.
\href{https://doi.org/10.15826/umj.2020.2.012}{doi:10.15826/umj.2020.2.012}.
\end{thebibliography}
\end{document}